\newcommand{\R}{\mathbb{R}}
\newcommand{\N}{\mathbb{N}}
\newcommand{\C}{\mathbb{C}}
\newcommand{\M}{\mathcal{M}}
\newcommand{\ra}{\rightarrow}
\renewcommand{\L}{\mathcal{L}}
\newcommand{\1}{\mathbb{1}}
\renewcommand{\O}{\mathcal{O}}
\title{Estimation of Stopping Times for Stopped	Self-Similar Random Processes}
\author{Viktor Schulmann  \\
	TU Dortmund  \\
}
\date{\today}
\begin{document}
	\maketitle

	\newcommand{\E}{\operatorname{E}}          
	\newcommand{\Var}{\operatorname{Var}}
	\newcommand{\Cov}{\operatorname{Cov}}	
	\renewcommand{\Re}{\operatorname{Re}}   
	\renewcommand{\Im}{\operatorname{Im}}   
	\newcommand{\MSE}{\operatorname{MSE}}   
	\renewcommand{\epsilon}{\varepsilon}
	\newcommand{\supp}{\operatorname{supp}}

	
	\newtheorem{satz}{Theorem}[]
	\newtheorem{lem}[satz]{Lemma}
	\newtheorem{kor}[satz]{Corollary}
	\newtheorem{defi}[satz]{Definition}
	\newtheorem{bsp}[satz]{Example}
	\newtheorem{bmk}[satz]{Remark}
	
	\renewcommand{\labelenumi}{\roman{enumi})}  
	
	
\begin{abstract}
Let $X=(X_t)_{t\geq 0}$ be a known process and $T$ an unknown random time independent of $X$. Our goal is to derive the distribution of $T$ based on an iid sample of $X_T$. Belomestny and Schoenmakers (2015) propose a solution based the Mellin transform in case where $X$ is a Brownian motion. Applying their technique we construct a non-parametric estimator for the density of $T$ for a self-similar one-dimensional process $X$. We calculate the minimax convergence rate of our estimator in some examples with a particular focus on Bessel processes where we also show asymptotic normality.\\
\end{abstract}

{{\bf AMS Numbers:} 62G07, 62G20, 60G18, 60G40.    	\\
	
{{\bf Keywords:} estimation of stopping times, multiplicative deconvolution, Mellin transform, self-similar process, Bessel process}

\section{Introduction}
\cite{Belomestnyasymnorm} considered the problem of recovering the distribution of an independent random time $T$ based on iid samples from a one-dimensional Brownian motion $B$ at time $T$. \cite{comte} already considered this problem for Poisson processes. Here we use the method of \cite{Belomestnyasymnorm} and derive corresponding results for self-similar processes. We particularly focus on Bessel processes. As a consequence, we extend results from \cite{Belomestnyasymnorm} to multi-dimensional Brownian motion. This is accomplished by considering the two-norm of the multi-dimensional Brownian motion, thus reducing the problem to the case of a Bessel process which is a one-dimensional process and can be treated similarly to the case of one-dimensional Brownian motion. More specifically, we give a non-parametric estimator for the density $f_T$ of $T$. We show consistency of this estimator with respect to the $L^2$ risk and derive a polynomial convergence rate for sufficiently smooth densities $f_T$. Moreover, we show that this rate is optimal in the minimax sense. The constructed estimator is also shown to be asymptotically normal.\\The paper is organized as follows: In Section \ref{secmellin} we recapitulate the \textit{Mellin transform} which is our main tool throughout this paper. Using this transform we construct our estimator in Section \ref{selfsimilarchapter} by solving a multiplicative deconvolution problem which is related to the original problem through self-similarity of the underlying process. The use of the Mellin transform in multiplicative deconvolution problems proposed by \cite{Belomestnyasymnorm} is different to the standard approach which consists in applying a log-transformation and thus reducing the problem to an additive deconvolution problem which is usually addressed by the kernel density deconvolution technique. In Section \ref{convergenceana} we give bounds on bias and variance of the estimator in the general self-similar case. In the following two sections we lay our focus on Bessel processes and give the convergence rates of our estimator for this case (Section \ref{secbesselprozess}) and show its asymptotic normality (Section \ref{asymptoticnormality}). Section \ref{secotherprocesses} is devoted to two further examples of self-similar processes where our method yields consistent estimators. Their convergence rates are provided there. In the following Section \ref{optimalitysection} we show optimality in the minimax sense of all previously obtained rates. Some numerical examples are given in Section \ref{simulationseasy}. Finally, we collect some of the longer proofs in Section \ref{proofsec}.

\section{Mellin Transform}\label{secmellin}
In this section we recapitulate some properties of the Mellin transform from \cite{butzer}. This integral transform will be our main tool in estimation procedures of the next sections. For $a<b$ define the space
 \[\mathfrak{M}_{(a,b)} := \bigcap\limits_{c\in (a,b)} \left\{f:\mathbb{R_+}\rightarrow\mathbb{C} \middle | \int_0^\infty |f(x)|x^{c-1}dx<\infty \right\}.\]
If $f$ is the density function of an $\R_+$-valued random variable, then we have at least $f\in\mathfrak{M}_1$. Moreover, if $f:\mathbb{R_+}\rightarrow\mathbb{R}$ is locally integrable on $\mathbb{R}_+$ with
\begin{center}
 $f(x)=\left\{\begin{array}{ll} \mathcal{O}(x^{-a}), & \text{for}~~ x\rightarrow 0\\
\mathcal{O}(x^{-b}), & \text{for}~~ x\rightarrow\infty \end{array}\right.  , $
\end{center}
then $f\in\mathfrak{M}_{(a,b)}$ holds.

\begin{defi}\label{defmellintrans}
For a densitiy function $f\in \mathfrak{M}_{(a,b)}$ of a random variable $X$ define
\[ \mathcal{M}[f](s) := \mathcal{M}[X](s):= \int_0^\infty f(x)x^{s-1}dx\]
as the Mellin transform of $f$ (or of $X$) in $s\in\mathbb{C}$ with $\Re(s)\in (a,b)$.
\end{defi}
If $f\in \mathfrak{M}_{(a,b)}$, then $\mathcal{M}[f](s)$ is well defined and holomorphic on the strip ${\{s\in\C|\Re(s)\in (a,b) \}}$ according to \cite{butzer}.
\begin{bsp}\label{bspmellintrans}
	\begin{enumerate}
		\item[(i)] Consider gamma densities 
		\begin{equation}\label{gammadichte}
		 f(x)=\frac{r^{\sigma}}{\Gamma(\sigma)}x^{\sigma-1}e^{-rx} 
		\end{equation}
		for $x,\sigma,r>0$. For all $s\in\C$ with $\Re(s-\sigma+1)> 0$ we have
		$$\M[f](s)=\frac{r^{1-s}}{\Gamma(\sigma)}\Gamma(s+\sigma-1).$$
			\item[(ii)] Consider for $t>0$, $d\geq 1$ the densities
		\begin{equation}\label{besseldichte}
	 f_t(x) =  \frac{2^{1-\frac{d}{2}}t^{-\frac{d}{2}}}{\Gamma\left({d}/{2}\right)} x^{d-1} e^{-\frac{x^2}{2t}} ,\quad x> 0.
	 \end{equation}
For $\Re ({s})> 1-d$ elementary calculus shows that
			\begin{equation}\label{besselmellin}
			\mathcal{M}[f_1](s) = \frac{1}{\Gamma\left({d}/{2}\right)}\Gamma\left(\frac{s+d-1}{2}\right) 2^{\frac{s-1}{2}}. 
			 \end{equation}	
	\end{enumerate}
\end{bsp}

Similar to the well-known relation of the classical Fourier transform to sums of independent random variables, the Mellin transform behaves multiplicatively with respect to products of independent random variables:

\begin{satz}\label{mellinprodukt}
	Let $X$ and $Y$ be independent $\R_+$-valued random variables with densities $f_X\in  \mathfrak{M}_{(a,b)}$ and $f_Y\in  \mathfrak{M}_{(c,d)}$, and Mellin transforms  $\mathcal{M}[X]$ and $\mathcal{M}[Y]$ for $a<b,~c<d,~(a,b)\cap(c,d)\neq \emptyset$.	Then ${XY}$ hat a density $f_{XY}\in  \mathfrak{M}_{(a,b)\cap (c,d)}$, and 
\begin{align*}
\mathcal{M}[XY](s) =\mathcal{M}[X](s)\mathcal{M}[Y](s)
\end{align*} 
	for all ${s \in \C}$ with $\Re(s)\in(a,b)\cap (c,d)$.
\end{satz}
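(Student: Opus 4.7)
The plan is to first derive the multiplicative convolution formula for the density of $XY$ and then to evaluate its Mellin transform by a single change of variables together with Fubini's theorem. The multiplicativity $\mathcal{M}[XY] = \mathcal{M}[X]\,\mathcal{M}[Y]$ is morally the statement $\E[(XY)^{s-1}] = \E[X^{s-1}]\,\E[Y^{s-1}]$, which is immediate from independence; the real work is to push this identity to complex $s$ inside the joint strip and to establish that $f_{XY}$ actually lies in $\mathfrak{M}_{(a,b)\cap(c,d)}$.

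First I would deduce the product density. Since $X$ and $Y$ are independent positive random variables, the map $(x,y)\mapsto (xy,y)$ has inverse $(z,y)\mapsto (z/y,y)$ with Jacobian $1/y$, so a standard change of variables applied to the joint density $f_X(x)f_Y(y)$ followed by marginalisation in $y$ gives
\[
f_{XY}(z) = \int_0^\infty f_X(z/y)\, f_Y(y)\, \frac{dy}{y}, \qquad z > 0.
\]
Now fix any real $\sigma \in (a,b) \cap (c,d)$. Because the integrand is nonnegative, Tonelli allows me to write
\[
\int_0^\infty f_{XY}(z)\, z^{\sigma-1}\, dz = \int_0^\infty\!\!\int_0^\infty f_X(z/y)\, f_Y(y)\, \frac{z^{\sigma-1}}{y}\, dz\, dy,
\]
and the substitution $u = z/y$ in the inner integral reduces it to $y^{\sigma-1}\,\mathcal{M}[f_X](\sigma)$. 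Pulling this factor out of the outer integral yields $\mathcal{M}[f_X](\sigma)\,\mathcal{M}[f_Y](\sigma)$, which is finite by assumption. This simultaneously proves $f_{XY}\in\mathfrak{M}_{(a,b)\cap(c,d)}$ and produces the absolute-integrability bound needed below.

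Finally, for any complex $s$ with $\Re(s) = \sigma \in (a,b) \cap (c,d)$, the bound $|z^{s-1}| = z^{\sigma-1}$ combined with the Tonelli estimate above legitimises Fubini; rerunning the same substitution without absolute values then gives
\[
\mathcal{M}[XY](s) = \mathcal{M}[X](s)\,\mathcal{M}[Y](s),
\]
as claimed. The only real obstacle is the careful bookkeeping for the Jacobian when deriving the product density; once the Mellin convolution formula is in hand, the multiplicativity of $\mathcal{M}$ follows from a one-line Fubini plus substitution argument, and both pieces of the conclusion fall out of the same computation.
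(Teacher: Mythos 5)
Your proof is correct and is the standard argument. The paper itself does not prove Theorem~\ref{mellinprodukt}; it is recapitulated from \cite{butzer}, and the Mellin-convolution formula \eqref{mellinfaltungfkt} for $f_{XY}$ is simply stated afterwards as an easy observation. Your derivation of that convolution via the change of variables $(x,y)\mapsto(xy,y)$, the Tonelli computation at real $\sigma$ to get both finiteness (hence $f_{XY}\in\mathfrak{M}_{(a,b)\cap(c,d)}$) and the value $\mathcal M[X](\sigma)\mathcal M[Y](\sigma)$, and the observation that $|z^{s-1}|=z^{\Re(s)-1}$ licenses Fubini for complex $s$, is exactly the textbook proof and fills in what the paper leaves to the reference.
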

In the setting of Theorem \ref{mellinprodukt} it is easy to see that $f_{XY}$ is identical to 
\begin{equation}\label{mellinfaltungfkt}
(f_X \odot f_Y)(s) :=\int_0^\infty f_X\left(\frac{s}{x}\right)f_Y(x) \frac{1}{x} dx
\end{equation} 
for all ${s \in \C}$ with $\Re(s)\in(a,b)\cap (c,d)$. The function $f_X \odot f_Y$ is called \textit{Mellin convolution} of $f_X$ and $f_Y$.\\
For $a<b$ denote the space of holomorphic functions on $\{s\in\C|\Re(s)\in (a,b) \}$ by $\mathcal H (a,b)$. The mapping $\M:\mathfrak{M}_{(a,b)}\ra \mathcal H (a,b)$, $f\mapsto \M[f]$ is injective. Given the Mellin transform of a function $f$ we can reconstruct $f$:

\begin{satz} \label{invmellintrans}
For $a<\gamma<b$ let  $f\in \mathfrak{M}_{(a,b)}$.  If $$\int_{-\infty}^\infty |\mathcal{M}[f](\gamma+iv)| dv<\infty,$$
then the inversion formula
\[f(x)= \frac{1}{2\pi} \int_{-\infty}^{\infty} \mathcal{M}[f](\gamma+iv)x^{-\gamma-iv} dv\]
holds almost everywhere for $x\in\R_+$.
\end{satz}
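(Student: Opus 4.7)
The plan is to reduce the inversion formula to the classical Fourier inversion theorem via the substitution $x = e^{u}$, which is the standard bridge between the Mellin and Fourier transforms.

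First I would define $g(u) := f(e^u) e^{\gamma u}$ for $u\in\R$. The hypothesis $f\in\mathfrak M_{(a,b)}$ with $\gamma\in(a,b)$ gives, after the change of variables $x=e^u$ in the defining integral,
\[
\int_{-\infty}^{\infty}|g(u)|\,du = \int_{-\infty}^{\infty}|f(e^u)|e^{\gamma u}\,du = \int_0^\infty |f(x)|x^{\gamma-1}\,dx < \infty,
\]
so $g\in L^1(\R)$. Performing the same substitution in the Mellin integral at $s=\gamma+iv$ yields
\[
\M[f](\gamma+iv) = \int_{-\infty}^{\infty} g(u)\, e^{ivu}\,du,
\]
which identifies $v\mapsto \M[f](\gamma+iv)$ (up to a reflection $v\mapsto -v$) with the Fourier transform $\hat g$ of $g$.

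Next I would invoke the assumption $\int_{-\infty}^{\infty}|\M[f](\gamma+iv)|\,dv<\infty$, i.e. $\hat g\in L^1(\R)$. Together with $g\in L^1(\R)$, the classical Fourier inversion theorem gives
\[
g(u) = \frac{1}{2\pi}\int_{-\infty}^{\infty} \M[f](\gamma+iv)\, e^{-ivu}\,dv \qquad \text{for a.e.\ } u\in\R.
\]
Substituting back $u=\ln x$ (which is a smooth bijection $\R_+\to\R$, preserving almost-everywhere statements via the change of measure $du=dx/x$) and multiplying by $e^{-\gamma u}=x^{-\gamma}$ on both sides,
\[
f(x) = x^{-\gamma}g(\ln x) = \frac{1}{2\pi}\int_{-\infty}^{\infty}\M[f](\gamma+iv)\, x^{-\gamma-iv}\,dv \qquad \text{for a.e.\ } x\in\R_+,
\]
which is the desired identity.

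The only delicate point is the appeal to Fourier inversion: one needs the version of the theorem which states that if both $g$ and $\hat g$ are in $L^1$, then the inverse Fourier formula recovers $g$ pointwise almost everywhere (it actually gives the continuous representative, but for the a.e.\ statement the standard argument suffices). All other steps are routine changes of variables and Fubini-style manipulations, which are justified by the two absolute integrability hypotheses ($f\in\mathfrak M_{(a,b)}$ giving $g\in L^1$, and the hypothesis on $\M[f](\gamma+i\cdot)$ giving $\hat g\in L^1$). The fact that $\M[f](s)$ is well defined and holomorphic on $\Re(s)\in(a,b)$, recalled right after Definition~\ref{defmellintrans}, is what ensures the integrand of the inverse formula makes sense to begin with.
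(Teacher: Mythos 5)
The paper states this as a known result (citing Butzer's monograph on Mellin transforms) and does not supply a proof, so there is nothing internal to compare against. Your argument is the standard, correct derivation: the logarithmic change of variable $x=e^u$ sends the Mellin transform on the vertical line $\Re(s)=\gamma$ to the Fourier transform of $g(u)=f(e^u)e^{\gamma u}$, the two integrability hypotheses translate exactly to $g\in L^1(\R)$ and $\hat g\in L^1(\R)$, and the $L^1$--$L^1$ Fourier inversion theorem then gives the a.e.\ identity after substituting back $u=\ln x$ (a diffeomorphism, so null sets correspond to null sets). You are also right to flag that Fourier inversion really produces the continuous representative, which is why the conclusion is phrased as an a.e.\ statement. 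This is a clean, self-contained proof of a result the paper simply takes off the shelf.
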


Another important result in the theory of Mellin transforms is the \textit{Parseval formula for Mellin tranforms} (see \cite[page 108]{bleistein} for the proof):
\begin{satz}\label{parseval}
Let $f,g:\R_+\ra\R$ be measurable functions such that
$$\int_0^\infty f(x)g(x)dx$$
exists. Suppose that $\M[f](1-\cdot)$ and $\M[g](\cdot)$ are holomorphic on some vertical strip $\mathcal{S}:=\{z\in\C|a<\Re(z)<b\}$ for $a,b\in\R$. If there is a $\gamma\in(a,b)$ with  $$\int_{-\infty}^\infty |\M[f](1-\gamma-is)|ds<\infty \quad \text{and} \quad \int_{0}^\infty x^{\gamma-1} |g(x)|dx<\infty,$$
then
$$ \int_0^\infty f(x)g(x)dx=\frac{1}{2\pi i}\int_{\gamma-i\infty}^{\gamma+i\infty} \M[f](1-s)\M[g](s)ds. $$
\end{satz}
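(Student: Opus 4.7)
The plan is to apply the Mellin inversion formula (Theorem~\ref{invmellintrans}) to $f$, plug the resulting contour representation into $\int_0^\infty f(x)g(x)\,dx$, interchange the order of integration via Fubini, and finally reparametrise the contour to obtain the stated form.

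First, I would translate the holomorphy hypothesis: since $\M[f](1-\cdot)$ is holomorphic on $\mathcal{S}=\{z:a<\Re(z)<b\}$, the function $\M[f]$ itself is holomorphic on the shifted strip $\{w:1-b<\Re(w)<1-a\}$. Setting $\gamma':=1-\gamma$, which lies in $(1-b,1-a)$, the $L^1$ assumption $\int_{-\infty}^\infty |\M[f](1-\gamma-is)|\,ds<\infty$ is exactly what Theorem~\ref{invmellintrans} needs, so for almost every $x\in\R_+$
\begin{equation*}
f(x)=\frac{1}{2\pi i}\int_{\gamma'-i\infty}^{\gamma'+i\infty}\M[f](s')\,x^{-s'}\,ds'.
\end{equation*}

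Next, I would substitute this into $\int_0^\infty f(x)g(x)\,dx$ and exchange the order of integration. On the vertical line $\Re(s')=\gamma'=1-\gamma$ one has $|x^{-s'}|=x^{\gamma-1}$, so the joint integrand is dominated by $|\M[f](s')|\cdot x^{\gamma-1}|g(x)|$, and the product of the two hypothesised finite integrals justifies Fubini. This yields
\begin{equation*}
\int_0^\infty f(x)g(x)\,dx=\frac{1}{2\pi i}\int_{\gamma'-i\infty}^{\gamma'+i\infty}\M[f](s')\left(\int_0^\infty g(x)\,x^{-s'}\,dx\right)ds',
\end{equation*}
and the inner integral is $\int_0^\infty g(x)\,x^{(1-s')-1}\,dx=\M[g](1-s')$, which is well defined because $|g(x)x^{-s'}|=|g(x)|x^{\gamma-1}$ is integrable by assumption. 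The change of variable $s=1-s'$ reverses the contour orientation and maps the line $\Re(s')=1-\gamma$ to $\Re(s)=\gamma$, producing the desired identity.

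The only genuinely technical point is the Fubini step, and the two integrability hypotheses in the statement are tailored precisely to supply the required absolute integrability on the product space $\{s':\Re(s')=1-\gamma\}\times\R_+$; the rest is bookkeeping with contours and a relabelling $s\leftrightarrow 1-s$. I do not anticipate any subtlety beyond this interchange.
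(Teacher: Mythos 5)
The paper does not prove this theorem itself but refers to \cite[page 108]{bleistein}, so there is no in-paper proof to compare against. Your argument is the standard Mellin--Parseval derivation and is correct: the hypothesis on $\M[f](1-\cdot)$ places $\M[f]$ on the shifted strip $\{w:1-b<\Re(w)<1-a\}$, $\gamma'=1-\gamma$ lies inside it, and the $L^1$ hypothesis $\int_{-\infty}^\infty|\M[f](1-\gamma-is)|\,ds<\infty$ is exactly the condition Theorem~\ref{invmellintrans} needs (after the trivial substitution $v=-s$). The Fubini interchange is justified because $|x^{-s'}|=x^{\gamma-1}$ on $\Re(s')=1-\gamma$, so the modulus of the joint integrand factorizes as $|\M[f](s')|\cdot x^{\gamma-1}|g(x)|$ and the two hypothesised finite integrals give precisely the required product-space bound. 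The inner integral is $\M[g](1-s')$ evaluated on $\Re(1-s')=\gamma$, and the relabelling $s=1-s'$, in which the sign from $ds'=-ds$ cancels the reversed orientation of the contour, yields the stated identity.

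One minor point worth making explicit: holomorphy of $\M[f]$ on a strip does not by itself guarantee $f\in\mathfrak{M}_{(1-b,1-a)}$, which is the actual hypothesis of Theorem~\ref{invmellintrans} (in general $\M[f]$ could be an analytic continuation past the strip of absolute convergence). However, under the conventions of this paper the Mellin transform is only defined where the integral converges absolutely, i.e.\ on $\mathfrak{M}$, so the theorem's hypothesis is implicitly $f\in\mathfrak{M}_{(1-b,1-a)}$ and $g\in\mathfrak{M}_{(a,b)}$, and your appeal to the inversion formula is legitimate.
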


\section{Construction of the Estimator} \label{selfsimilarchapter}
We consider a real-valued stochastic process $(Y_t)_{t\geq 0}$ with càdlàg paths which is self-similar with scaling parameter $H$ (for short, $H$-ss), that is
\begin{equation}\label{defselfsimilarity}
(Y_{at})_{t\geq 0}\stackrel{d}{=}(a^H Y_t)_{t\geq 0} \quad \text{for all}~ a>0.
\end{equation}
Here, $\stackrel{d}{=}$ denotes identity of all finite dimensional distributions. Let $T\geq 0$ be a stopping time with density $f_T$ independent of $Y$. It is easy to see that the density of the random variable $Y_T$ is given by 
	\begin{equation}
p_T(y)=  \int_0^\infty g_t(y) f_T(z) dt,\quad y\in\R,\label{dichtevonyT}
	\end{equation}
where $g_t$ are the densities of $Y_t$ ($t\geq 0$).
In order to construct a non-parametric estimator for $f_T$ based on iid samples $X_1,\dots,X_n$ of $Y_T$ we use the simple consequence of \eqref{defselfsimilarity} that
\begin{equation} \label{skalmitsz}
{T}^H Y_1 \stackrel{d}{=}Y_{T}.
\end{equation}
We take the absolute value on both sides and assume $f_T\in\mathfrak{M}_{(a,b)}$ with ${0\leq a<b}$ and $|Y_1|\in\mathfrak{M}_{(0,\infty)}$, so we can apply the Mellin transform on both sides of \eqref{skalmitsz} and obtain
\begin{align*}
 \mathcal{M}[|Y_T|](s) =  \mathcal{M}[{T}^H](s) \mathcal{M}[|Y_1|](s) = \mathcal{M}[{T}]\left(Hs-H+1\right)   \mathcal{M}[|Y_1|](s)   
 \end{align*}
for ${\max\{0,\frac{a+H-1}{H}\} < \Re(s) < \frac{b+H-1}{H}}$.
Setting $z:=Hs-H+1$ we conclude that
\begin{equation} \label{skalmitmellin2}
  \mathcal{M}[T](z) =  \frac{\mathcal{M}[|Y_T|](\frac{z+H-1}{H})}{\mathcal{M}[|Y_1|](\frac{z+H-1}{H}) }, \quad {\max\{1-H,a\} < \Re(z) < b}.
\end{equation}
If the Mellin inversion formula (Lemma \ref{invmellintrans}) is applicable to $T$, we may write 
\begin{equation}\label{skalmitmellin3}
  f_{T}(x) = \frac{1}{2\pi i} \int_{\gamma-i\infty}^{\gamma+i\infty} \mathcal{M}[T](z)x^{-z} dz 
  = \frac{1}{2\pi} \int_{-\infty}^{\infty} \mathcal{M}[T](\gamma+iv)x^{-\gamma-iv} dv 
\end{equation}
for $a< \gamma < b$. Combining \eqref{skalmitmellin2} and \eqref{skalmitmellin3} we obtain the representation
\begin{equation}\label{exactsolforbessel} 
  f_{T}(x) =   \frac{1}{2\pi} \int_{-\infty}^{ \infty} \frac{\mathcal{M}[|Y_T|](\frac{\gamma+H-1+iv}{H})}{\mathcal{M}[|Y_1|](\frac{\gamma+H-1+iv}{H}) }  x^{-\gamma-iv}dv 
\end{equation}
for $\max\{1-H,a\}< \gamma < b$. In order to obtain an estimator of $f_T$ based on \eqref{exactsolforbessel} we would like to replace $\mathcal{M}[|Y_T|]$ by its empirical counterpart $$\mathcal{M}_n [|Y_{T}|](s):= \frac{1}{n}\sum\limits_{k=1}^{n} |X_k|^{s-1}.$$ However, this substitution may prevent the integral in \eqref{exactsolforbessel} from converging. Thus, we introduce a sequence $(g_n)_{n\in\N}$ with $g_n\rightarrow \infty$ (chosen later) in order to regularize our estimator. In view of \eqref{exactsolforbessel} define
\begin{equation} \label{schaetzer}
\hat{f_n}(x):=  \frac{1}{2\pi} \int_{-g_n}^{g_n} \frac{\mathcal{M}_n[|Y_T|](\frac{\gamma+H-1+iv}{H})}{\mathcal{M}[|Y_1|](\frac{\gamma+H-1+iv}{H}) }  x^{-\gamma-iv} ~dv
\end{equation}
for $x>0$ and $\max\{1-H,a\}< \gamma < b$ as an estimator for $f_T$.

\section{Convergence Analysis}\label{convergenceana}
For the sake of brevity we introduce the notation $f(x)\lesssim g(x)$ for $x\ra a$, if $f=\mathcal{O}(g)$ in the Landau notation. We write $f(x)\sim g(x)$ for $x\ra a$,
if $f(x)\lesssim g(x)$ and $g(x)\lesssim f(x)$ for $x\ra a$. For $0\leq a<b$ and $\beta\in(0,\pi)$ consider the class of densities
\begin{align*}
\mathcal{C}(\beta,a,b):= \left\{ f\in\mathfrak{M}_{(a,b)}\middle| {\hspace*{-0.6cm} \exists \tilde f:S_\beta\ra \C \text{~holomorphic~with~} \tilde f|_{\R_+}=f,
	\atop
	\tilde f(z)\lesssim z^{-a} \text{~as~}z\ra 0, f(z)\lesssim z^{-b} \text{~as~}|z|\ra \infty} \right\}, 
\end{align*} 
where
$$ S_\beta := \{z\in\C: |\arg(z)|\leq \beta \}  \quad \text{and} \quad \mathcal{C}(\beta,a,\infty):=\bigcap\limits_{b\geq 0} \mathcal{C}(\beta,a,b),$$
For the bias of the estimator \eqref{schaetzer} we have:
\begin{satz} \label{abschaetzung erwartungswert fuer klasse}
Let $(Y_t)_{t\geq 0}$ be $H$-ss with càdlàg paths. Let $T\geq 0$ be a stopping time independent of $Y$ with density $f_T$. If $f_T\in\mathcal{C}(\beta,a,b)$ with $\beta\in(0,\pi)$ and $0\leq a<b$, then
\begin{equation}\label{biasabschforc}
  \E[f_T(x)- \hat{f_n}(x)] \lesssim x^{-\gamma} e^{-\beta g_n}
\end{equation}
for all $x>0$, $\gamma\in(\max\{a,1-H\},b)$.
\end{satz}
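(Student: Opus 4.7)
The plan is to split the bias into a contour-truncation error and then exploit the holomorphic extension of $f_T$ to the sector $S_\beta$ to get exponential decay of $\M[T]$ along vertical lines.

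First I would observe that by iid-averaging,
$$\E\!\left[\M_n[|Y_T|]\!\left(\tfrac{\gamma+H-1+iv}{H}\right)\right]=\E\!\left[|X_1|^{\frac{\gamma-1+iv}{H}}\right]=\M[|Y_T|]\!\left(\tfrac{\gamma+H-1+iv}{H}\right),$$
provided the exponent lies in the strip where $\M[|Y_T|]$ is defined (which holds for $\gamma\in(\max\{a,1-H\},b)$, since $|Y_1|\in\mathfrak{M}_{(0,\infty)}$ and $T\in\mathfrak{M}_{(a,b)}$ imply $|Y_T|\in\mathfrak{M}_{(a,b)}$ via Theorem \ref{mellinprodukt} applied to $T^H Y_1$). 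Thus Fubini yields
$$\E[\hat f_n(x)]=\frac{1}{2\pi}\int_{-g_n}^{g_n}\M[T](\gamma+iv)\,x^{-\gamma-iv}\,dv,$$
and comparing with the representation \eqref{skalmitmellin3},
$$\E[f_T(x)-\hat f_n(x)]=\frac{1}{2\pi}\int_{|v|>g_n}\M[T](\gamma+iv)\,x^{-\gamma-iv}\,dv.$$
Since $|x^{-\gamma-iv}|=x^{-\gamma}$, the claim reduces to showing
$$\int_{|v|>g_n}|\M[T](\gamma+iv)|\,dv\lesssim e^{-\beta g_n}.$$

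The core step is the pointwise estimate $|\M[T](\gamma+iv)|\lesssim e^{-\beta|v|}$, and this is where the class $\mathcal{C}(\beta,a,b)$ is used. Let $\tilde f_T$ be the holomorphic extension of $f_T$ to $S_\beta$. For $\theta\in(-\beta,\beta)$ consider the ray $\{re^{i\theta}:r>0\}$ and rewrite
$$\M[T](s)=\int_0^\infty f_T(x)x^{s-1}\,dx=e^{i\theta s}\int_0^\infty \tilde f_T(re^{i\theta})\,r^{s-1}\,dr,$$
which is justified by Cauchy's theorem applied to the pie-slice contour $\{re^{i\varphi}:0<r<R,\ 0\leq\varphi\leq\theta\}$ together with the decay hypotheses $\tilde f_T(z)\lesssim|z|^{-a}$ as $z\to 0$ and $\tilde f_T(z)\lesssim|z|^{-b}$ as $|z|\to\infty$, which kill the arcs at $r=0$ and $r\to\infty$ (using $a<\gamma<b$ to secure absolute convergence on both the original and the rotated rays). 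For $s=\gamma+iv$ with $v>0$, take $\theta=\beta$; then $|e^{i\theta s}|=e^{-\beta v}$ and the rotated integral is bounded by a constant depending only on the implied constants in the decay assumptions. A symmetric choice $\theta=-\beta$ handles $v<0$. This gives
$$|\M[T](\gamma+iv)|\lesssim e^{-\beta|v|}\qquad(v\in\R).$$

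Finally, integrating,
$$\int_{|v|>g_n}|\M[T](\gamma+iv)|\,dv\lesssim \int_{|v|>g_n} e^{-\beta|v|}\,dv=\frac{2}{\beta}e^{-\beta g_n},$$
so $|\E[f_T(x)-\hat f_n(x)]|\lesssim x^{-\gamma}e^{-\beta g_n}$, as claimed. As a by-product, the integrability hypothesis of Lemma \ref{invmellintrans} is verified, so the use of \eqref{skalmitmellin3} is legitimate.

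The main obstacle I anticipate is the rigorous justification of the contour rotation: one must show that the circular arcs at radius $R\to\infty$ and $r\to 0$ contribute negligibly in the strip $a<\gamma<b$. The decay $\tilde f_T(z)\lesssim|z|^{-b}$ at infinity gives arc integrand bounded by $R^{-b}\cdot R^{\gamma-1}\cdot R\,\theta=\theta R^{\gamma-b}\to 0$ since $\gamma<b$, and similarly $r^{a}\cdot r^{\gamma-1}\cdot r\to 0$ as $r\to 0$ since $\gamma>a$. This is the only place where the precise sector width $\beta$ and the decay exponents $a,b$ interact, and getting the bookkeeping right—especially near the boundary of the allowed range for $\gamma$—is the delicate part of the argument.
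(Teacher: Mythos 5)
Your proof is correct and follows the same overall outline as the paper's: Fubini to identify $\E[\hat f_n(x)]$ as the truncated inverse Mellin integral, then bound the tail $\int_{|v|>g_n}|\M[T](\gamma+iv)|\,dv$ using the exponential decay of $\M[T]$ on the vertical line $\Re(s)=\gamma$. The one place you diverge is instructive: where the paper simply cites Proposition 5 of Flajolet et al.\ for the estimate $|\M[T](\gamma+iv)|\lesssim e^{-\beta|v|}$, you supply the standard proof by rotating the contour to the ray $\{re^{i\beta}\}$ in the sector $S_\beta$, using Cauchy's theorem on a pie-slice and the decay exponents $a<\gamma<b$ to kill the arcs. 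This is the correct derivation (and is essentially what the cited Flajolet result amounts to), so the argument is sound and more self-contained than the paper's. Two tiny remarks on the bookkeeping: in the inner-arc estimate you wrote $r^{a}\cdot r^{\gamma-1}\cdot r$ where it should read $r^{-a}\cdot r^{\gamma-1}\cdot r=r^{\gamma-a}$ (the conclusion is unaffected since $\gamma>a\geq 0$); and strictly speaking the rotation to $\theta=\pm\beta$ needs $\tilde f_T$ holomorphic up to the closed sector boundary, which is consistent with the paper's definition $S_\beta=\{|\arg z|\leq\beta\}$. Your closing observation that the decay estimate also legitimizes the use of the inversion formula \eqref{skalmitmellin3} is a correct and worthwhile point that the paper leaves implicit.
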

\begin{proof}
Let $x> 0$. By Fubini's theorem and \eqref{skalmitmellin2},
\begin{align}
\E[\hat{f_n}(x)] & = \E \left[\frac{1}{2\pi} \int_{-g_n}^{g_n}\frac{\mathcal{M}_n[|Y_T|](\frac{\gamma+H-1+iv}{H})}{\mathcal{M}[|Y_1|](\frac{\gamma+H-1+iv}{H}) } x^{-\gamma-iv}~dv  \right] \nonumber\\
&= \frac{1}{2\pi} \int_{-g_n}^{g_n} \frac{\E\left[\mathcal{M}_n[|Y_T|](\frac{\gamma+H-1+iv}{H})\right]}{\mathcal{M}[|Y_1|](\frac{\gamma+H-1+iv}{H})}  x^{-\gamma-iv} ~dv .\nonumber\\
& =  \frac{1}{2\pi} \int_{-g_n}^{g_n} \frac{\mathcal{M}[|Y_T|](\frac{\gamma+H-1+iv}{H})}{\mathcal{M}[|Y_1|](\frac{\gamma+H-1+iv}{H}) }  x^{-\gamma-iv} ~dv \nonumber\\
&= \frac{1}{2\pi} \int_{-g_n}^{g_n}  \mathcal{M}[T](\gamma+iv) x^{-\gamma-iv}  dv.\label{erwwertvonschaetzer}
\end{align}
We combine Theorem \ref{invmellintrans} with \eqref{erwwertvonschaetzer} to get
\begin{equation}\label{abschbias}
|f_T(x)-\E[\hat{f_n}(x)] |\leq \frac{x^{-\gamma}}{2\pi} \int_{\{|v|>g_n\} } \left|\mathcal{M}[T](\gamma+iv) \right| dv.
\end{equation}
Since $f_T\in\mathcal{C}(\beta,a,b)$ implies $\M[T](\gamma+iv) \lesssim e^{-\beta|v|}$ for $v\ra \pm \infty$, $\gamma\in(a,b)$ (see Proposition 5 in \cite{flajolet}), we have 
\begin{equation}\label{Lkonstante}
L:=  \int_{\{|v|>g_n\} } e^{\beta|v|}\left|\mathcal{M}[T](\gamma+iv) \right| dv<\infty.
\end{equation} 
Moreover, \eqref{abschbias} gives
\begin{align*}
f_T(x)- \E [\hat{f_n}(x)]
&\leq e^{-\beta g_n} \frac{x^{-\gamma}}{2\pi}  \int_{\{|v|>g_n\} } e^{\beta|v|}\left|\mathcal{M}[T](\gamma+iv) \right| dv \leq e^{-\beta g_n} x^{-\gamma}\frac{L}{2\pi}  
\end{align*}
for all $x>0$, which is our claim.
\end{proof}
Having established an upper bound on the bias of $\hat{f}_n$, we now shall do the same for the variance of our estimator. 
\begin{satz}\label{abschaetzung varianz}
Let $(Y_t)_{t\geq 0}$ be $H$-ss with càdlàg paths. Let $T\geq 0$ be a stopping time independent of $Y$ with density $f_T$. If $f_T\in\mathfrak{M}_{(a,b)}$ with ${0\leq a< b}$ and $|Y_1|\in\mathfrak{M}_{(0,\infty)}$, then
 \begin{align*}
 \Var[ \hat f_n(x) ] \leq  \frac{\M[|Y_1|](\frac{2\gamma-2}{H}+1) \M[T](2\gamma-1)}{4\pi^2 x^{-2\gamma} n}
  \left(\int_{-g_n}^{g_n}  \frac{1}{|\mathcal{M}[|Y_1|](\frac{\gamma+H-1+iv}{H})|}   dv \right)^2
 \end{align*}
  for all $n\in\N$ and all $x>0$.
\end{satz}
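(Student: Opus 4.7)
The plan is to exploit the linearity of $\hat{f}_n(x)$ in the empirical Mellin transform in order to express $\hat{f}_n(x)$ as a sample mean of iid random variables. Writing
\[
Z_k(x) := \frac{1}{2\pi}\int_{-g_n}^{g_n} \frac{|X_k|^{(\gamma+H-1+iv)/H-1}}{\M[|Y_1|]\!\left(\tfrac{\gamma+H-1+iv}{H}\right)}\, x^{-\gamma-iv}\, dv,
\]
we have $\hat{f}_n(x) = \tfrac{1}{n}\sum_{k=1}^n Z_k(x)$. One first checks (by the symmetry $v\mapsto -v$ together with $\overline{\M[\,\cdot\,](s)} = \M[\,\cdot\,](\bar s)$) that each $Z_k(x)$ is real, so iid independence gives $\Var[\hat{f}_n(x)] = \tfrac{1}{n}\Var[Z_1(x)] \leq \tfrac{1}{n}\E[|Z_1(x)|^2]$.

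Next I would pass the absolute value inside the integral. Since $|x^{-\gamma-iv}| = x^{-\gamma}$ and the exponent of $|X_k|$ can be rewritten as $\tfrac{\gamma-1+iv}{H}$ with real part $(\gamma-1)/H$, we obtain
\[
|Z_1(x)| \leq \frac{x^{-\gamma}}{2\pi}\, |X_1|^{(\gamma-1)/H} \int_{-g_n}^{g_n} \frac{dv}{\bigl|\M[|Y_1|]\!\left(\tfrac{\gamma+H-1+iv}{H}\right)\bigr|}.
\]
Squaring, taking expectations, and pulling the (deterministic) integral outside yields
\[
\E[|Z_1(x)|^2] \leq \frac{x^{-2\gamma}}{4\pi^2}\, \E\!\left[|X_1|^{2(\gamma-1)/H}\right] \left(\int_{-g_n}^{g_n} \frac{dv}{\bigl|\M[|Y_1|]\!\left(\tfrac{\gamma+H-1+iv}{H}\right)\bigr|}\right)^{\!2}.
\]

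The final step is to rewrite the moment as a Mellin transform. Since $X_1 \eqindistr Y_T$, the definition of $\M$ gives $\E[|X_1|^{2(\gamma-1)/H}] = \M[|Y_T|]\!\left(\tfrac{2\gamma-2}{H}+1\right)$. Now applying the key factorization \eqref{skalmitmellin2} at the point $s = \tfrac{2\gamma-2}{H}+1$, for which $Hs-H+1 = 2\gamma-1$, turns this into the product $\M[T](2\gamma-1)\,\M[|Y_1|]\!\left(\tfrac{2\gamma-2}{H}+1\right)$. Substituting into the previous display and dividing by $n$ produces exactly the claimed bound.

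The step I expect to be the main bookkeeping obstacle is the implicit range-of-parameter check: one needs $2\gamma-1$ to lie in the strip $(a,b)$ in which $\M[T]$ exists, and $\tfrac{2\gamma-2}{H}+1$ to lie in $(0,\infty)$ so that $\M[|Y_1|]$ is defined there; otherwise $\E[|X_1|^{2(\gamma-1)/H}]$ may be infinite and the bound becomes vacuous. Once these ranges are verified (they are consistent with the admissible $\gamma$ from Theorem~\ref{abschaetzung erwartungswert fuer klasse}), the rest of the argument is a direct application of the triangle inequality and the product identity for Mellin transforms, with no delicate integrability issues beyond what is already built into the assumption $|Y_1|\in\mathfrak{M}_{(0,\infty)}$.
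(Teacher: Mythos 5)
Your proof is essentially the paper's argument reorganized: the paper applies the Minkowski-type inequality $\Var[\int f_v\,dv]\leq(\int\sqrt{\Var[f_v]}\,dv)^2$ to the integral form of $\hat f_n$ and then bounds the pointwise variance of $|Y_T|^{(\gamma-1+iv)/H}$ by its second moment, whereas you first write $\hat f_n$ as an iid average of $Z_k$'s and move the absolute value inside the integral defining $Z_1$ --- in both cases the phase $|X_1|^{iv/H}$ is killed and the same moment $\E[|Y_T|^{(2\gamma-2)/H}]=\M[|Y_1|]\bigl(\tfrac{2\gamma-2}{H}+1\bigr)\M[T](2\gamma-1)$ emerges from self-similarity, which you invoke via \eqref{skalmitmellin2} and the paper invokes directly. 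One thing worth flagging: your computation correctly produces the factor $x^{-2\gamma}$, whereas the stated bound (and the paper's own display \eqref{unglinproofvonvarabsch}) places $x^{-2\gamma}$ in the denominator, i.e.\ effectively $x^{2\gamma}$; since $|x^{-\gamma-iv}|^2=x^{-2\gamma}$ and Theorem~\ref{konvbessel} relies on $\Var[x^\gamma\hat f_n(x)]$ being bounded uniformly in $x$, this is a sign typo in the paper and your exponent is the intended one.
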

\begin{proof}
Let $x> 0$, $n\in\N$. As 
$$\Var \left[\int_a^b f_v dv\right] \leq \left(\int_a^b \sqrt{\Var [f_v]}dv\right)^2,$$
for any bounded random function $f_v$ (continuous in $v$), we obtain
\begin{align}
\Var[ \hat{f_n}(x)]  &\leq \frac{1}{4\pi^2 x^{-2\gamma}} \left(\int_{-g_n}^{g_n}  \frac{\sqrt{\Var[\mathcal{M}_n[|Y_T|](\frac{\gamma+H-1+iv}{H})]}}{|\mathcal{M}[|Y_1|](\frac{\gamma+H-1+iv}{H}) |}   dv \right)^2\nonumber\\
&=  \frac{1}{4\pi^2 x^{-2\gamma} n} \left(\int_{-g_n}^{g_n}  \frac{\sqrt{\Var[|Y_T|^\frac{\gamma-1+iv}{H}]} }{|\mathcal{M}[|Y_1|](\frac{\gamma+H-1+iv}{H})|}   dv \right)^2.\label{unglinproofvonvarabsch}
\end{align}
In order to get a bound on $\Var[|Y_T|^\frac{\gamma-1+iv}{H}]$ we use the self-similarity of $Y$ to get
\begin{align*}
\Var[|Y_T|^\frac{\gamma-1+iv}{H}] &\leq \E [(|Y_T|)^{(2\gamma-2)/H}]\\
&= \int_0^\infty \E [(|Y_t|)^{(2\gamma-2)/H}] f_T(t) dt \\
&= \E [(|Y_1|)^{(2\gamma-2)/H}] \int_0^\infty  t^{2\gamma-2} f_T(t) dt,
\end{align*}
which (together with \eqref{unglinproofvonvarabsch}) gives the desired bound on $\Var[\hat{f_n}(x)]$.
\end{proof}

\section{Application to Bessel Processes} \label{secbesselprozess}
In this section we choose $Y$ to be a Bessel process $BES=(BES_t)_{t\geq 0}$ starting in $0$ with dimension $d\in [1,\infty)$. Note that the case $d=1$ leads to the absolute value of the one-dimensional Brownian motion and was already considered in \cite{Belomestnyasymnorm}. We refer to \cite{yor} for detailed information about Bessel processes. It is well-known, that Bessel processes are $\frac{1}{2}$-ss and have continuous paths. Marginal densities are given by:
\[ f_t(y)= \frac{2^{1-\frac{d}{2}}t^{-\frac{d}{2}}}{\Gamma\left({d}/{2}\right)}  y^{d-1} e^{-\frac{y^2}{2t}} ,\quad y\geq 0.\]
In Example \ref{bspmellintrans}(ii) we calculated $\mathcal{M}[BES_1](s) = \frac{1}{\Gamma\left({d}/{2}\right)}\Gamma\left(\frac{s+d-1}{2}\right) 2^{\frac{s-1}{2}}$. Looking at \eqref{schaetzer} we obtain
\begin{align} 
 \hat{f_n}(x)=  \frac{1}{2\pi} \int_{-g_n}^{g_n} \frac{ \Gamma\left(\frac{d}{2}\right)\frac{1}{n}\sum_{k=1}^{n}X_k^{2(\gamma-1+iv)}}{\Gamma\left(\gamma+\frac{d}{2}-1+iv\right) 2^{\gamma-1+iv}}  x^{-\gamma-iv} ~dv \label{schaetzerbessel}
\end{align}
as an estimator for the density $f_T(x)$ of a stopping time $T\geq 0$ for $x>0$ and $\max\{1/2,a\}< \gamma < b$, where $a,b$ are such that $f_T\in\mathfrak{M}_{(a,b)}$ and $X_1,...,X_n$ are independent samples of $BES_T$.
With our major result Theorem \ref{konvbessel} we shall derive the convergence rates for \eqref{schaetzerbessel}.

\begin{satz}\label{konvbessel}
If $f_T\in\mathcal{C}(\beta,a,b)$ for some $0\leq a<b$, $\beta\in(0,\pi)$ and if there is a $\gamma\in(a,b)$ with $2\gamma-1\in(a,b)$ and $\gamma > (4-d)/4$, then
\begin{equation}\label{abschinmaini}
 x^{2\gamma}\E [  |f_T(x)- \hat{f_n}(x)|^2 ]  \leq C_{L,d,\gamma}  \left(\frac{1}{n}e^{\pi g_n} + e^{-2\beta g_n}\right),\quad x> 0 
\end{equation}
for some $C_{L,d,\gamma}>0$ depending only on $L,\gamma,d$ as well as $T$.
Moreover, taking 
\begin{equation}\label{choicehninbessel}
{g_n= \frac{\log n}{\pi+2\beta}}
\end{equation}
in \eqref{abschinmaini}, one has the polynomial convergence rate
\begin{equation}\label{rateinbessel}
 \sup\limits_{x> 0}\left\{	x^{\gamma}\sqrt{\E [  |f_T(x)- \hat{f_n}(x)|^2 ]} \right\}  \lesssim  n^\frac{-\beta }{\pi+2\beta} ,\quad n\rightarrow \infty.
\end{equation}
\end{satz}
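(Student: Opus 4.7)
The plan is a standard bias-variance decomposition: write $\E[|f_T(x)-\hat f_n(x)|^2] = (f_T(x)-\E[\hat f_n(x)])^2 + \Var[\hat f_n(x)]$, bound each piece, and then balance the two contributions via an optimal choice of $g_n$. The bias is immediate from Theorem \ref{abschaetzung erwartungswert fuer klasse}: since $f_T\in\mathcal{C}(\beta,a,b)$, squaring \eqref{biasabschforc} gives $(f_T(x)-\E[\hat f_n(x)])^2 \lesssim x^{-2\gamma}e^{-2\beta g_n}$.

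For the variance I would specialize Theorem \ref{abschaetzung varianz} to the Bessel case $H=1/2$. The moment prefactor $\mathcal{M}[BES_1]((2\gamma-2)/H+1)=\mathcal{M}[BES_1](4\gamma-3)$ is finite exactly when $\gamma>(4-d)/4$ (this is the integrability condition of $y^{4\gamma+d-5}e^{-y^2/2}$ at the origin), while $\mathcal{M}[T](2\gamma-1)$ is finite by the hypothesis $2\gamma-1\in(a,b)$. The heart of the argument is the integral in $v$: plugging $\mathcal{M}[BES_1](s)=\Gamma((s+d-1)/2)\,2^{(s-1)/2}/\Gamma(d/2)$ at $s=2\gamma-1+2iv$ reduces the denominator to $|\Gamma(\gamma+(d-2)/2+iv)|$ up to harmless constants. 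The classical Stirling asymptotic $|\Gamma(\alpha+iv)|\sim\sqrt{2\pi}\,|v|^{\alpha-1/2}e^{-\pi|v|/2}$ as $|v|\to\infty$ therefore gives $1/|\mathcal{M}[BES_1](2\gamma-1+2iv)| \lesssim |v|^{3/2-\gamma-d/2}\,e^{\pi|v|/2}$ for large $|v|$; since the condition $\gamma>(4-d)/4$ keeps $\gamma+(d-2)/2>0$ away from poles of $\Gamma$, the integrand is continuous on the real line and the integral $\int_{-g_n}^{g_n}$ is dominated by its endpoint behavior, yielding $\int_{-g_n}^{g_n} 1/|\mathcal{M}[BES_1](\cdots)|\,dv \lesssim e^{\pi g_n/2}$. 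Squaring and multiplying by $1/n$ gives $\Var[\hat f_n(x)] \leq C\,x^{2\gamma}\,n^{-1}\,e^{\pi g_n}$, and summing with the bias estimate produces \eqref{abschinmaini} with a constant $C_{L,d,\gamma}$ independent of $x$.

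The rate \eqref{rateinbessel} then follows by equating the two terms: $n^{-1}e^{\pi g_n}=e^{-2\beta g_n}$ forces $e^{(\pi+2\beta)g_n}=n$, i.e., $g_n=\log n/(\pi+2\beta)$, and both terms reduce to $n^{-2\beta/(\pi+2\beta)}$; taking square roots and multiplying through by $x^{\gamma}$ gives the claimed uniform bound. The main obstacle is the Stirling-based control of the variance integral: one must argue that the polynomial prefactor $|v|^{3/2-\gamma-d/2}$ does not spoil the $e^{\pi g_n}$ bound, and that the constants in the final estimate can be chosen independent of $x$ so that the supremum in \eqref{rateinbessel} is legitimate. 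A minor but necessary bookkeeping point is verifying that the Mellin argument $4\gamma-3$ lies in the domain of holomorphy of $\mathcal{M}[BES_1]$, which is again guaranteed by $\gamma>(4-d)/4$.
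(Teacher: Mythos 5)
Your approach is essentially identical to the paper's: bias from Theorem~\ref{abschaetzung erwartungswert fuer klasse}, variance from Theorem~\ref{abschaetzung varianz} with $H=1/2$, explicit $\mathcal M[BES_1]$ from Example~\ref{bspmellintrans}(ii), Stirling-type control of the $\Gamma$-denominator (the paper packages this as Corollary~\ref{abschgamma}), and balancing the two terms by solving $n^{-1}e^{\pi g_n}=e^{-2\beta g_n}$ for $g_n$. You also correctly identify that $\gamma>(4-d)/4$ is exactly the finiteness condition for $\mathcal M[BES_1](4\gamma-3)$ and that $2\gamma-1\in(a,b)$ handles $\mathcal M[T](2\gamma-1)$.

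One step in your argument is stated too quickly, and it is the same place the paper itself glosses over. You write that the integral of $1/|\mathcal M[BES_1](2\gamma-1+2iv)|\lesssim|v|^{3/2-\gamma-d/2}e^{\pi|v|/2}$ over $[-g_n,g_n]$ is ``dominated by its endpoint behavior, yielding $\lesssim e^{\pi g_n/2}$.'' That is only automatic when $3/2-\gamma-d/2\le 0$, i.e.\ $\gamma\ge(3-d)/2$; otherwise integrating the bound picks up an extra $g_n^{3/2-\gamma-d/2}$, which after plugging in $g_n\sim\log n$ becomes a logarithmic correction to the rate. The paper handles this by citing Corollary~\ref{abschgamma}(ii), which applies precisely under $\gamma+d/2-1\ge 1/2$. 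For $d\ge 2$ this is implied by $2\gamma-1\in(a,b)\Rightarrow\gamma>1/2$, so the argument is airtight there; for $d=1$, $\gamma\in(3/4,1)$ falls in case (i) of the corollary and a $g_n^{1-\gamma}$ factor appears. You correctly flagged this as ``the main obstacle'' but then asserted the conclusion without resolving it; to close the proof cleanly you should either restrict to $\gamma\ge(3-d)/2$ or explicitly carry the polynomial factor and absorb it into the rate (which the paper also does not do, so the gap is shared).
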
 
\begin{proof}
Let $x> 0$. We use the upper bound on variance obtained in Lemma \ref{abschaetzung varianz} with $H=1/2$ to get 
\begin{align}
\Var[ x^\gamma \hat f_n(x) ] &\leq \frac{C_0(\gamma,d)}{4\pi^2 n} \left(\int_{-g_n}^{g_n}  \frac{1}{|\mathcal{M}[BES_1]({2\gamma-1+2iv})|}   dv \right)^2 \label{abschvarinproof}
\end{align} 
for some $C_0(\gamma,d)>0$. By Example \ref{bspmellintrans}(ii) and Lemma \ref{abschgamma}(ii) we have
\begin{align}
\Var[ x^\gamma \hat f_n(x) ] &\leq \frac{C_0(\gamma,d)\Gamma\left(\frac{d}{2}\right)^2 }{\pi^2 2^{2\gamma}n} \left(\int_{-g_n}^{g_n}  \frac{1}{|\Gamma\left(\gamma-1+\frac{d}{2}+iv\right)|} dv\right)^2 \nonumber\\
&\leq \frac{C_0(\gamma,d)\Gamma\left(\frac{d}{2}\right)^2 }{\pi^2 2^{2\gamma}n} (C_1(d,\gamma)+C_2e^{\pi g_n/2})^2 \label{boundonvar}
\end{align} 
for some constants $C_1(d,\gamma)$ and $C_2$. Adding \eqref{boundonvar} and \eqref{biasabschforc} gives
\begin{align}
\E [  |f_T(x)- \hat{f_n}(x)|^2 ] &\leq \frac{C_0(\gamma,d)\Gamma\left(\frac{d}{2}\right)^2 }{\pi^2 2^{2\gamma}n} (C_1(d,\gamma)+C_2e^{\pi g_n/2})^2 + C^2 e^{-2\beta g_n} \nonumber\\
&\leq C_{L,d,\gamma} \left(\frac{1}{n}e^{\pi g_n} + e^{-2\beta g_n}\right) \label{gleicheraten}
\end{align}
for some $C_{L,d,\gamma}>0$. The choice \eqref{choicehninbessel} yields the rate \eqref{rateinbessel}.
\end{proof}
The class $\mathcal{C}(\beta,a,b)$ is fairly large. In particular, $\mathcal{C}(\beta,0,\infty)$ includes for all $\beta\in(0,\pi/2)$ such well-known families of distributions as Gamma, Weibull, Beta, log-normal and inverse Gaussian. So, if $T$ belongs to one of those families, Theorem \ref{konvbessel} is true for any $\gamma>\max\{1/2,(4-d)/4 \}$. If $d\geq 2$, then we only require $\gamma>1/2$.

\section{Asymptotic Normality for Bessel Processes}\label{asymptoticnormality}
Note that the estimator \eqref{schaetzerbessel} can be written as 
\begin{equation} \label{representationassum}
\hat{f_n}(x)= \frac{1}{n}\sum_{k=1}^{n} Z_{n,k}
\end{equation}
with
\begin{equation}\label{znk}
Z_{n,k}:= \frac{\Gamma\left(\frac{d}{2}\right)}{\pi} \int_{-g_n}^{g_n} \frac{X_k^{2(\gamma-1+iv)}}{\Gamma\left(\gamma+\frac{d}{2}-1+iv\right) 2^{\gamma+iv}}  x^{-\gamma-iv} ~dv.
\end{equation}
Since $\hat{f_n}$ is a sum of iid variables, we can show that (under mild assumptions on $f_T$) $\hat{f_n}$ is asymptotically normal. In fact, we have:
\begin{satz}\label{asympnormality}
	Let $f_T\in \mathfrak{M}_{(a,b)}$ for some $0\leq a<b$. Suppose there is a $\gamma\in (a,b)$ such that $2\gamma-1\in (a,b)$, $\gamma>(4-d)/4$ and $(\delta+2)\gamma-\delta-1\in (a,b)$ for some $\delta>0$
	and
	\begin{equation}\label{mellinendl}
	\int_{-\infty}^\infty |\M[T](2\gamma-1+iv)|dv<\infty.
	\end{equation} 
	
	If we choose $g_n \sim \log(n)$ in \eqref{schaetzerbessel} then we have
	\begin{equation}\label{claimnormalit}
	\sqrt{n}\nu_n^{-1/2}(\hat{f_n}(x)-\E[\hat{f_n}(x)]) \stackrel{d}{\rightarrow} \mathcal{N}(0,1)
	\end{equation}
	
	for all $x>0$, where 
	\begin{equation}\label{varinnormal}
	\nu_n:=\Var[Z_{n,1}]=\frac{c\Gamma\left(\frac{d}{2}\right)}{2\pi^3 x^{2\gamma}} g_n^{-2\gamma+d-3} e^{\pi g_n} \log^{-2}(g_n) (1+o(1))
	\end{equation}
	with some $c>0$ given by \eqref{konstanteinasympnorm}. 
\end{satz}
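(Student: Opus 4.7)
Since $\hat{f_n}(x)=n^{-1}\sum_{k=1}^n Z_{n,k}$ is a scaled sum of i.i.d.\ random variables for each fixed $n$, the plan is to invoke Lyapunov's CLT for triangular arrays: (\ref{claimnormalit}) reduces to verifying
\[
\frac{\E|Z_{n,1}-\E Z_{n,1}|^{2+\delta}}{n^{\delta/2}\nu_n^{1+\delta/2}}\longrightarrow 0.
\]
The integrand in (\ref{znk}) is mapped to its complex conjugate under $v\mapsto -v$, so $Z_{n,1}$ is real, and bounding $|\M[T](\gamma+iv)|\le\M[T](\gamma)<\infty$ gives $\E Z_{n,1}=O(g_n)$; hence $(\E Z_{n,1})^2$ is polynomial in $g_n$ and negligible against the exponential growth of $\E Z_{n,1}^2$, so $\nu_n\sim\E Z_{n,1}^2$. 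The proof thus splits into identifying the leading asymptotic of $\E Z_{n,1}^2$ and bounding $\E|Z_{n,1}|^{2+\delta}$.

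\emph{Variance asymptotic.} Expanding the square, applying Fubini, and using self-similarity together with Example~\ref{bspmellintrans}(ii) to rewrite $\E[X_1^{4\gamma-4+2i(v+w)}]=\M[T](2\gamma-1+i(v+w))\,\M[BES_1](4\gamma-3+2i(v+w))$, I obtain
\[
\E Z_{n,1}^2=\frac{\Gamma(d/2)}{4\pi^2 x^{2\gamma}}\int_{-g_n}^{g_n}\int_{-g_n}^{g_n}\frac{\M[T](2\gamma-1+i(v+w))\,\Gamma(2\gamma-2+\tfrac{d}{2}+i(v+w))}{\Gamma(\gamma+\tfrac{d}{2}-1+iv)\,\Gamma(\gamma+\tfrac{d}{2}-1+iw)}x^{-i(v+w)}dv\,dw.
\]
Same-sign pairs $v,w\approx\pm g_n$ are suppressed by $|\Gamma(2\gamma-2+d/2+iu)|\sim e^{-\pi g_n}$ at $|u|$ of order $g_n$, so the dominant contribution comes from opposite-sign pairs. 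Substituting $v=g_n-a$, $w=-g_n+b$ and applying Stirling with $\sigma':=\gamma+d/2-1$ yields
\[
\frac{1}{\Gamma(\sigma'+iv)\Gamma(\sigma'+iw)}\sim\frac{g_n^{3-2\gamma-d}}{2\pi}e^{\pi g_n}e^{-\pi(a+b)/2}e^{-i\psi_n(a,b)},
\]
where the crucial cancellation of the $g_n\log g_n$ terms leaves $\psi_n(a,b)=(\log g_n+2)(b-a)+o(1)$. Combining with $x^{-i(b-a)}$ the net phase becomes $-i\kappa_n(b-a)$ with $\kappa_n\sim\log g_n$. Extending the $(a,b)$-region to $[0,\infty)^2$ (permissible by the decay $e^{-\pi(a+b)/2}$) and Taylor expanding the bounded factor $\M[T](2\gamma-1+iu)\Gamma(2\gamma-2+d/2+iu)$ at $u=0$, the integral factorises into
\[
\int_0^\infty e^{-\pi a/2+ia\kappa_n}\,da\cdot\int_0^\infty e^{-\pi b/2-ib\kappa_n}\,db=\frac{1}{(\pi/2)^2+\kappa_n^2}\sim\log^{-2}(g_n),
\]
which is the source of the $\log^{-2}(g_n)$ factor. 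Doubling for the symmetric region $v\approx -g_n,w\approx g_n$ and collecting constants delivers (\ref{varinnormal}) with an explicit constant $c$ essentially of the form $\M[T](2\gamma-1)\Gamma(2\gamma-2+d/2)$.

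\emph{Moment bound and Lyapunov condition.} From (\ref{znk}) one reads off the deterministic estimate $|Z_{n,1}|\le x^{-\gamma}X_1^{2\gamma-2}I_n$ with $I_n=\tfrac{\Gamma(d/2)}{\pi 2^\gamma}\int_{-g_n}^{g_n}|\Gamma(\sigma'+iv)|^{-1}dv\sim C\,g_n^{3/2-\gamma-d/2}e^{\pi g_n/2}$ by Stirling. Hence $\E|Z_{n,1}|^{2+\delta}\lesssim I_n^{2+\delta}\,\E X_1^{(\delta+2)(2\gamma-2)}$, and the moment equals $\M[T]((\delta+2)\gamma-\delta-1)\,\M[BES_1]((\delta+2)(2\gamma-2)+1)$, which is finite precisely because the hypothesis places $(\delta+2)\gamma-\delta-1\in(a,b)$. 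Both $I_n^{2+\delta}$ and $\nu_n^{1+\delta/2}$ carry the same exponential factor $e^{(1+\delta/2)\pi g_n}$, so it cancels in the Lyapunov ratio; what remains is a polynomial in $g_n$ multiplied by $\log^{2+\delta}g_n$, divided by $n^{\delta/2}$. With $g_n\sim\log n$ this is polylogarithmic in $n$ against a polynomial in $n$, hence tends to zero, completing the proof.

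The hard part is the variance asymptotic: the Stirling phase analysis must track two Gamma functions in the denominator, confirm the cancellation of the $g_n\log g_n$ contributions, and combine the remaining $\log g_n$-frequency oscillation with the damping $e^{-\pi(a+b)/2}$ to extract the precise $\log^{-2}(g_n)$. Once this is in hand, reality of $Z_{n,1}$, the moment bound, and the Lyapunov verification are routine.
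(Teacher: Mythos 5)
Your overall strategy coincides with the paper's: reduce \eqref{claimnormalit} to the Lyapunov condition, note that $\E Z_{n,1}$ stays bounded (the paper in fact observes $\E Z_{n,1}\to f_T(x)$ by \eqref{exactsolforbessel}, slightly sharper than your $O(g_n)$ bound but equally sufficient), bound $\E|Z_{n,1}|^{2+\delta}$ via the deterministic estimate $|Z_{n,1}|\le c\,X_1^{2\gamma-2}\int_{-g_n}^{g_n}|\Gamma(\gamma+d/2-1+iv)|^{-1}dv$ together with the hypothesis $(\delta+2)\gamma-\delta-1\in(a,b)$ (this is the content of Lemma~\ref{momentevonzn}), and then observe that the exponential factors cancel in the Lyapunov ratio.

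Where your argument genuinely diverges from the paper is in extracting the $\log^{-2}(g_n)$ from the variance. The paper isolates the dominant region by a hierarchy of auxiliary integrals $I_n^1,\dots,I_n^4$, recognizes $I_n^4$ as a Fourier-type integral with frequency $\log g_n$, and invokes Erdélyi's asymptotic expansion (Lemma~\ref{erdelyi}): the first term of the expansion is purely imaginary and cancels on taking the real part, so the surviving $\log^{-2}(g_n)$ coefficient is the real part of the first derivative at $u=0$, which is where the $\pi/2$ in \eqref{konstanteinasympnorm} comes from. You instead change variables $v=g_n-a$, $w=-g_n+b$, Taylor-expand the smooth factor $\M[T](2\gamma-1+iu)\Gamma(2\gamma-2+d/2+iu)$ at $u=b-a=0$, and factor the remaining Gaussian-times-oscillation into the explicit product $\int_0^\infty e^{-\pi a/2+ia\kappa_n}da\cdot\int_0^\infty e^{-\pi b/2-ib\kappa_n}db=\bigl((\pi/2)^2+\kappa_n^2\bigr)^{-1}$ with $\kappa_n\sim\log g_n$. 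This is more elementary and makes the source of the $\log^{-2}$ transparent (it is the Fourier transform of $e^{-\pi|u|/2}$ at frequency $\kappa_n$, decaying like $\kappa_n^{-2}$ because of the kink at $u=0$); the constant you obtain, $\Gamma(2\gamma-2+d/2)\M[T](2\gamma-1)$, agrees with \eqref{konstanteinasympnorm} up to the $\pi/2$ normalisation absorbed in the paper's definition of $c$. The price is that you have to justify extending the $(a,b)$-region to the full quadrant and dropping the constraint $|a-b|\le\rho_n$, which you correctly attribute to the damping $e^{-\pi(a+b)/2}$, and to argue that the Taylor remainder contributes only $O(\kappa_n^{-3})$; these steps are asserted rather than proved in your sketch, whereas the paper's $I_n^1$--$I_n^3$ decomposition carries out the corresponding bookkeeping explicitly. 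Both routes are sound, and your factorised form also yields the leading power $g_n^{3-2\gamma-d}$ cleanly (the exponent $-2\gamma+d-3$ printed in \eqref{varinnormal} appears to be a sign slip; the displayed formula for $I_n^3$ in the paper's proof has $g_n^{-(2\gamma+d-3)}$, consistent with your computation).
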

We present the proof in Section \ref{proofasympnorm}. As we mentioned in the end of Section \ref{secbesselprozess}, we can often assume $(a,b)=(0,\infty)$ so that the choice of $\gamma$ is only restricted by $\gamma>\max\{1/2,(4-d)/4 \}$. If $(a,b)=(0,\infty)$, then a suitable $\delta$ can always be found, for instance, any $\delta>2\gamma/(1-\gamma)$ is valid. If additionally $d\geq 2$, then the statement is true for all $\gamma>1/2$.

It is possible to give a Berry-Esseen type error estimate for the convergence in \eqref{claimnormalit}. This is a new result even for dimension $d=1$. 

\begin{satz}\label{berryesseen}
	Let $f_T\in \mathfrak{M}_{(a,b)}$ for some $0\leq a<b$. Suppose there is a $\gamma\in (a,b)$ such that $2\gamma-1\in (a,b)$, $\gamma>(4-d)/4$, $3\gamma-2\in (a,b)$ and \eqref{mellinendl} holds.
	Fix some $x>0$. Denote by $F_n$ the distribution function of $$\sqrt{n}\nu_n^{-1/2}(\hat{f_n}(x)-\E[\hat{f_n}(x)])$$ (where $\hat f_n(x)$ is defined by \eqref{schaetzerbessel} and $\nu_n=n\Var[\hat f_n(x)]$ is given by \eqref{varinnormal}) and by $\Phi$ the distribution function of the standard normal distribution. If we choose $g_n \sim \log(n)$ in \eqref{schaetzerbessel} then we have
	\begin{equation}\label{claimberryesseen}
	\rho_n :=\sup\limits_{y\in\R} |F_n(y)-\Phi(y)|\lesssim  \begin{cases}
	n^{-\frac{1}{2}} (\log n)^{4\gamma-d+3} \log^{3}(\log(n)) , &\text{~if~}\gamma<\frac{3-d}{2}\\
	n^{-\frac{1}{2}} (\log n)^{3(2\gamma-d+3)/2} \log^{3}(\log(n))  , &\text{~if~}\gamma \geq \frac{3-d}{2}
	\end{cases}
	\end{equation}
	for $n\ra\infty$.
\end{satz}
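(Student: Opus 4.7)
Because \eqref{representationassum}--\eqref{znk} represent the estimator as an iid average $\hat f_n(x) = n^{-1}\sum_{k=1}^n Z_{n,k}$, and because the substitution $v\mapsto -v$ in \eqref{znk} shows the integrand is conjugate-symmetric and hence each $Z_{n,k}$ is real-valued, the classical Berry--Esseen inequality applies and gives
$$\rho_n \;\leq\; C_0\,\frac{\E\bigl|Z_{n,1}-\E Z_{n,1}\bigr|^3}{\sqrt{n}\,\nu_n^{3/2}}.$$
The denominator is already controlled: \eqref{varinnormal} from Theorem \ref{asympnormality} gives $\nu_n^{3/2} \sim c'\,g_n^{3(d-3-2\gamma)/2}\,e^{3\pi g_n/2}\,(\log g_n)^{-3}$. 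The task thus reduces to an upper bound on the third absolute moment of $Z_{n,1}$.

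Next I would factor $Z_{n,1} = C\,x^{-\gamma}\,X_1^{2\gamma-2}\,H_n\!\bigl(X_1^2/(2x)\bigr)$ with
$$H_n(r) := \int_{-g_n}^{g_n} \frac{r^{iv}}{\Gamma(\gamma+d/2-1+iv)}\,dv,$$
and invoke Stirling's estimate $|\Gamma(a+iv)|^{-1}\sim c\,|v|^{1/2-a}\,e^{\pi|v|/2}$ with $a = \gamma+d/2-1$. A boundary-Laplace evaluation of $\int_0^{g_n} v^{3/2-\gamma-d/2}e^{\pi v/2}\,dv$ yields the uniform bound $\|H_n\|_\infty \lesssim g_n^{3/2-\gamma-d/2}\,e^{\pi g_n/2}$.

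I would then split into two regimes. For $\gamma \geq (3-d)/2$, cubing the uniform bound directly suffices: $\E|Z_{n,1}|^3 \lesssim g_n^{9/2-3\gamma-3d/2}\,e^{3\pi g_n/2}\,\E X_1^{6\gamma-6}$, and $\E X_1^{6\gamma-6} = \M[T](3\gamma-2)\,\M[BES_1](6\gamma-5)$ is finite by the hypothesis $3\gamma-2 \in (a,b)$ and the self-similarity relation \eqref{skalmitmellin2}. Division by $\nu_n^{3/2}$ leaves the exponent $9-3d$ in $g_n$, which is dominated by $3(2\gamma-d+3)/2$ precisely when $\gamma \geq (3-d)/2$. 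For $\gamma < (3-d)/2$ the cube of the uniform bound is too lossy, so instead I would write $|Z_{n,1}|^3 \leq (C\,X_1^{2\gamma-2}\|H_n\|_\infty)\,Z_{n,1}^2$ and compute $\E[X_1^{2\gamma-2}Z_{n,1}^2]$ via the same Mellin--Parseval mechanism that underlies the variance asymptotic \eqref{varinnormal}, except with $\M[T](\cdot)$ now sampled on the line $\Re z = 3\gamma-2$ (which sits inside $(a,b)$ by hypothesis) instead of $\Re z = 2\gamma-1$. The oscillatory double integral over $(v_1,v_2)\in(-g_n,g_n)^2$ has the same Gamma-function structure, so the $g_n$-exponent of $\E[X_1^{2\gamma-2}Z_{n,1}^2]$ matches that of $\nu_n$; multiplying by $\|H_n\|_\infty$ and dividing by $\nu_n^{3/2}$ gives a ratio bounded by a power of $g_n$ that is majorised by $g_n^{4\gamma-d+3}$ (times $(\log g_n)^3$). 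Substituting $g_n \sim \log n$ in both regimes then converts the $g_n$-powers into $\log n$-powers and delivers \eqref{claimberryesseen}.

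The main obstacle is the regime $\gamma < (3-d)/2$: the Stirling prefactor $|v|^{3/2-\gamma-d/2}$ is then increasing in $|v|$, so $\|H_n\|_\infty^3$ loses the critical power of $g_n$ and one must recover it from oscillatory cancellations inside the double integral. This is exactly the same delicate boundary-Laplace analysis underlying the sharp variance formula \eqref{varinnormal}, and it proceeds in the same manner here, now with the Mellin transform of $T$ evaluated at the shifted abscissa $3\gamma-2$ rather than $2\gamma-1$.
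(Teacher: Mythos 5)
Your proposal shares the Berry--Esseen framework with the paper (both cite the Berry--Esseen bound \eqref{abschforberry} and the variance asymptotic \eqref{varinnormal}), but you replace the paper's single remaining step --- invoking Lemma~\ref{momentevonzn} with $j=3$ to bound $\E[|Z_{n,1}|^3]$ --- with a bespoke two-regime analysis. For $\gamma\geq(3-d)/2$ your cubed-sup-norm bound on the kernel $H_n$ is essentially the same calculation as Lemma~\ref{momentevonzn} via Corollary~\ref{abschgamma}(ii), and the arithmetic checks out: $g_n^{9-3d}\log^3(g_n)$ is majorized by $(\log n)^{3(2\gamma-d+3)/2}\log^3(\log n)$ precisely when $\gamma\geq(3-d)/2$. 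For $\gamma<(3-d)/2$ the paper simply cites the same lemma, whereas you substitute the split $|Z_{n,1}|^3\leq C\,X_1^{2\gamma-2}\|H_n\|_\infty Z_{n,1}^2$ together with a Parseval/boundary-Laplace analysis of $\E[X_1^{2\gamma-2}Z_{n,1}^2]$ at the shifted abscissa $3\gamma-2$; that machinery is not in the paper's proof of Theorem~\ref{berryesseen} and gives a $g_n$-exponent of $3-d$ (up to $\log g_n$ factors), which is strictly smaller than the stated $4\gamma-d+3$, so your route also yields \eqref{claimberryesseen} (with room to spare). Two caveats you should make explicit: (1) your refined bound $\|H_n\|_\infty\lesssim g_n^{3/2-\gamma-d/2}e^{\pi g_n/2}$ with a possibly negative $g_n$-power is stronger than what Corollary~\ref{abschgamma}(ii) states and needs its own justification via Lemma~\ref{abschgamma0} and integration by parts near the endpoint, and (2) the claim that the $g_n$-exponent of $\E[X_1^{2\gamma-2}Z_{n,1}^2]$ matches that of $\nu_n$ is plausible because the denominator $\Gamma(\gamma+d/2-1+iv)\Gamma(\gamma+d/2-1-iu)$ governing the endpoint asymptotics is unchanged, but you would need to actually rerun the decomposition $I_n^1,\dots,I_n^4$ from Section~\ref{proofasympnorm} with the numerator shifted (using $3\gamma-2\in(a,b)$), and you cannot assume the oscillatory cancellation leading to the $\log^{-2}(g_n)$ factor persists --- for an upper bound the cruder $\log^{-1}(g_n)$ or even $\log^0(g_n)$ suffices, and it is worth saying so.
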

\begin{proof}
	Let $x>0$ and $n\in\N$. Consider the representation \eqref{representationassum} of $\hat f_n(x)$. Berry-Esseen Theorem (see \cite{gnssler}) states
	\begin{equation}\label{abschforberry}
	\rho_n \leq \frac{6 \E[|Z_{n,1}-\E[Z_{n,1}]|^3]}{(\Var[Z_{n,1}])^{3/2} n^{1/2}}.
	\end{equation}
	We choose $j=3$ in Lemma \ref{momentevonzn} to get
	\begin{equation}\label{abschdritteszentrmoment}
	\E[|Z_{n,1}-\E[Z_{n,1}]|^3]\lesssim \E[|Z_{n,1}|^3]\lesssim \begin{cases}
	g_n^{-\gamma-d/2+3/2} e^{3\pi g_n/2}, &\text{~if~}\gamma+d/2-3/2 < 0\\
	e^{3\pi g_n/2}, &\text{~if~}\gamma+d/2-3/2 \geq 0
	\end{cases}
	\end{equation}
	for $n\ra\infty$. By Theorem \ref{asympnormality} we have \eqref{varinnormal}. Choose ${g_n\sim\log(n)}$. Plugging \eqref{abschdritteszentrmoment} and \eqref{varinnormal} into \eqref{abschforberry} concludes the proof.
\end{proof}
Note that the signs of the powers ${4\gamma-d+3}$ and ${3(2\gamma-d+3)/2}$ in \eqref{claimberryesseen} are ambiguous and depend on the relative positions of $\gamma$ and $d$. However, if $d\geq 2$ then we only have the case $\gamma+d/2-3/2\geq 0$ and the power of the logarithm is positive.\\
The following observation about the absolute moments of $Z_{n,1}$ is useful in the proof of Theorem \ref{asympnormality} but also holds some insights in itself.
\begin{lem}\label{momentevonzn}
	Let $f_T\in \mathfrak{M}_{(a,b)}$ for some $0\leq a<b$ and $(g_n)_{n\in\N}\subset \R_+$ with $g_n\ra\infty$ as $n\ra\infty$. If there is a $\gamma\in (a,b)$ such that $2\gamma-1\in (a,b)$, $\gamma>(4-d)/4$ and $(\gamma-1)j+1\in(a,b)$, then
	\begin{equation} \label{momentevonznabsch}
	\E[|Z_{n,1}|^{j}] \lesssim \begin{cases}
	g_n^{-\gamma-d/2+3/2} e^{\pi g_n j/2}, &\text{~if~}\gamma+d/2-3/2 < 0\\
	e^{\pi g_n j/2}, &\text{~if~}\gamma+d/2-3/2 \geq 0
	\end{cases}
	\end{equation}
	as $n\ra\infty$ for all $j\in\R_+$. In particular, all absolute moments of $Z_{n,1}$ exist for all $n\in\N$ greater than some $n_0\in\N$.
\end{lem}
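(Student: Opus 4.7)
The plan is to exploit the fact that the modulus of the integrand defining $Z_{n,1}$ depends on $v$ only through $|\Gamma(\gamma+d/2-1+iv)|^{-1}$. This reduces the moment estimate to (i) a one-dimensional integral bound via Stirling's formula and (ii) a moment of $X_1$ computed via self-similarity.

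\textbf{Step 1 (Factorization).} Set $a:=\gamma+d/2-1$, $C_0:=\Gamma(d/2)x^{-\gamma}/(\pi 2^\gamma)$, and $\lambda:=\log(X_1^2/(2x))$. Since $X_1,x>0$ one has $|X_1^{2iv}|=|2^{iv}|=|x^{-iv}|=1$, so the defining formula \eqref{znk} can be written as
\[
 Z_{n,1} = C_0\,X_1^{2(\gamma-1)}\,K_n(\lambda),\qquad K_n(\lambda) := \int_{-g_n}^{g_n} \frac{e^{i\lambda v}}{\Gamma(a+iv)}\,dv.
\]
The triangle inequality gives $|K_n(\lambda)|\le I_n:=\int_{-g_n}^{g_n}|\Gamma(a+iv)|^{-1}dv$, hence $|Z_{n,1}|^j\le C_0^j X_1^{2j(\gamma-1)} I_n^{\,j-2}|K_n(\lambda)|^2$ once we peel off two factors.

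\textbf{Step 2 (Stirling estimate of $I_n$).} Using $|\Gamma(a+iv)|\sim \sqrt{2\pi}\,|v|^{a-1/2}e^{-\pi|v|/2}$ as $|v|\to\infty$ and an integration by parts that identifies the boundary at $v=\pm g_n$ as the dominant contribution, one gets
\[
 I_n \lesssim \begin{cases} g_n^{3/2-\gamma-d/2}\,e^{\pi g_n/2}, & \gamma+d/2-3/2<0,\\ e^{\pi g_n/2}, & \gamma+d/2-3/2\ge 0,\end{cases}
\]
the case split coming from the sign of $1/2-a$.

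\textbf{Step 3 (Moment of $X_1$ via self-similarity).} The $(1/2)$-self-similarity gives $X_1 \stackrel{d}{=} T^{1/2} BES_1$, so
\[
 \E\bigl[X_1^{2j(\gamma-1)}\bigr] = \M[T]\bigl((\gamma-1)j+1\bigr)\,\M[BES_1]\bigl(2(\gamma-1)j+1\bigr).
\]
The first factor is finite by the assumption $(\gamma-1)j+1\in(a,b)$; the second is an explicit gamma value (Example \ref{bspmellintrans}(ii)), finite by $\gamma>(4-d)/4$.

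\textbf{Step 4 (Plancherel for the $|K_n|^2$ factor).} Expanding $|K_n(\lambda)|^2$ as a double integral and taking expectation with the weight $X_1^{2j(\gamma-1)}$, the self-similarity from Step 3 lets me identify the answer as
\[
 \E\!\left[X_1^{2j(\gamma-1)}|K_n(\lambda)|^2\right] = C\,x^{-2\gamma}\!\iint_{[-g_n,g_n]^2}\!\frac{\M[T](2\gamma-1+i(v_1-v_2))\,\Gamma(\alpha_j+i(v_1-v_2))}{\Gamma(a+iv_1)\overline{\Gamma(a+iv_2)}}\,x^{-i(v_1-v_2)}\,dv_1dv_2,
\]
with $\alpha_j:=j(\gamma-1)+d/2$. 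Applying Stirling to each of the three gamma factors, the integrand carries an exponential factor $e^{\pi(|v_1|+|v_2|-|v_1-v_2|)/2}$ which is $1$ in the opposite-sign region and equals $e^{\pi\min(|v_1|,|v_2|)}$ in the same-sign region. A Laplace-type estimate, done first on the inner variable $t=v_1-v_2$ and then on the outer variable (whose maximum lies at distance $O(1)$ from $g_n$), produces a bound $\lesssim g_n^{3/2-\gamma-d/2}e^{\pi g_n}$ in the first case and $\lesssim e^{\pi g_n}$ in the second. Multiplying by $I_n^{\,j-2}$ from Step~2 gives the stated bound for $j\ge 2$.

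\textbf{Step 5 (Small $j$).} For $0<j<2$ the inequality follows from Lyapunov's inequality $\E|Z_{n,1}|^j\le(\E|Z_{n,1}|^2)^{j/2}$, once the case $j=2$ has been established.

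The hard part will be Step~4: verifying by a careful Laplace analysis that the polynomial correction in the double integral is only $g_n^{3/2-\gamma-d/2}$ (independent of $j$) rather than $g_n^{j(3/2-\gamma-d/2)}$, which would have resulted from a naive triangle bound $|K_n|^j\le I_n^{\,j}$. The crucial cancellation is between the Stirling growth of $|\Gamma(a+iv_k)|^{-1}$ for $k=1,2$ and the Stirling decay of $\Gamma(\alpha_j + i(v_1-v_2))$, which confines the dominant mass of the integrand to a boundary layer of constant width near the vertex $(v_1,v_2)=(g_n,g_n)$ (and its mirror images).
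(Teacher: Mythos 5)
The paper's own proof is much shorter and does not aim for the refined $j$-independent $g_n$-power your Step~4 is chasing: one simply pulls $X_1^{2(\gamma-1)}$ out of the modulus of the integral, obtains $|Z_{n,1}|\le c\,X_1^{2(\gamma-1)}\int_{-g_n}^{g_n}|\Gamma(\gamma+d/2-1+iv)|^{-1}dv$, raises to the $j$-th power, applies Corollary~\ref{abschgamma} (which encapsulates your Step~2), and factors $\M[X_1]((\gamma-1)2j+1)=\M[T]((\gamma-1)j+1)\M[BES_1](2(\gamma-1)j+1)$ by self-similarity (your Step~3). Note that this route produces the polynomial factor $g_n^{j(3/2-\gamma-d/2)}$ when $\gamma+d/2-3/2<0$, i.e.\ the exponent \emph{is} multiplied by $j$. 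Your decomposition $|K_n|^j\le I_n^{j-2}|K_n|^2$ followed by a Laplace analysis of the double integral is a genuinely different and more ambitious approach, but it does not close, for two concrete reasons. First, the Mellin factor in your Step~4 display is misidentified: with the weight $X_1^{2j(\gamma-1)}$ inside, the argument must be $\M[T]((\gamma-1)j+1+i(v_1-v_2))$, not $\M[T](2\gamma-1+i(v_1-v_2))$ (they agree only at $j=2$). Second, even granting the bound $g_n^{3/2-\gamma-d/2}e^{\pi g_n}$ for the $|K_n|^2$ piece, multiplying by $I_n^{\,j-2}\lesssim g_n^{(j-2)(3/2-\gamma-d/2)}e^{(j-2)\pi g_n/2}$ from Step~2 yields exponent $(j-1)(3/2-\gamma-d/2)$, not $3/2-\gamma-d/2$, so the claimed $j$-independence is not reached.

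More fundamentally, the $j$-independent power in \eqref{momentevonznabsch} appears to be unattainable. Take $j=2$: then $\E[|Z_{n,1}|^2]=\Var[Z_{n,1}]+(\E Z_{n,1})^2$ with $\E Z_{n,1}\to f_T(x)$, and the derivation of $\Var[Z_{n,1}]$ in the proof of Theorem~\ref{asympnormality} (see the prefactor $e^{\pi g_n}/g_n^{2\gamma+d-3}$ in front of $I^3_n$) gives $\E[|Z_{n,1}|^2]\sim C\,g_n^{3-2\gamma-d}e^{\pi g_n}\log^{-2}(g_n)$. Since $3-2\gamma-d=2(3/2-\gamma-d/2)$, and since $g_n^{3-2\gamma-d}\log^{-2}(g_n)/g_n^{3/2-\gamma-d/2}=g_n^{3/2-\gamma-d/2}\log^{-2}(g_n)\to\infty$ whenever $\gamma+d/2-3/2<0$, the stated bound cannot hold already at $j=2$. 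The exponent in the first line of \eqref{momentevonznabsch} should be $j(-\gamma-d/2+3/2)$ -- this is also what the paper's own argument delivers -- and the stated form is almost certainly a typographical slip. Your effort would be better spent on the paper's simpler triangle-inequality route than on a Laplace estimate aimed at a bound that is false as stated.
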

\begin{proof}
	Case $\gamma+d/2-3/2 \geq 0$: By Jensen inequality, Lemma \ref{abschgamma}(ii) and \eqref{skalmitmellin2} (with $H=1/2$ and $Y=BES$ there) we have	
	\begin{align*}
	\E[|Z_{n,1}|^{j}] \leq&~ c  \E [X_1^{2(\gamma-1)j}] \int_{-g_n}^{g_n} \frac{1}{\left| \Gamma\left(\gamma+d/2-1+iv\right)\right|^{j}}  ~dv  \\
	\leq&~ c \M[X_1](2(\gamma-1)j+1) (C_{\gamma,d,j}+C_j e^{\pi g_n j/2} )\\
	=&~ c  \M[T]((\gamma-1)j+1) \M[BES_1](2(\gamma-1)j+1) (C_{\gamma,d,j}+C_j e^{\pi g_n j/2} ),
	\end{align*}
	where $c:=\Gamma\left(\frac{d}{2}\right)^{j}(2^\gamma\pi x^\gamma)^{-j}  $ and $C_{\gamma,d,j},C_j>0$. The case $\gamma+d/2-3/2 < 0$ follows similarly applying Lemma \ref{abschgamma}(i) instead of (ii).
\end{proof}
For the special case $(a,b)=(0,1)$ and $d=j=1$ this result is mentioned in \cite{Belomestnyasymnorm} but without an extensive proof which we provide here.
Note that for $d\geq 2$ the assumption $\gamma>(4-d)/4$ is redundant. Moreover, we always have the smaller bound of the second case in \eqref{momentevonznabsch}.

\section{Some Other Self-Similar Processes} \label{secotherprocesses}
\subsection{Normally Distributed Processes}\label{subsecfracbrown}
Let $Y=(Y_t)_{t\geq 0}$ be $H$-ss with càdlàg paths and $Y_1$ standard normally distributed. As example consider a fractional Brownian motion. This setting is easily generalized to the case where $Y_1\sim \mathcal N(0,\sigma^2)$ with $\sigma^2>0$ by considering the process $(\tilde Y_t)_{t\geq 0} := (Y_t/\sigma)_{t\geq 0}$ and modifying our observations to $\tilde X_i :=X_i/\sigma$.
Taking $d=1$ in Example \ref{bspmellintrans}(ii) we see that estimator \eqref{schaetzer} assumes the form
\begin{equation} \label{schaetzergausssch}
\hat{f_n}(x)=  \frac{1}{2\sqrt{\pi}} \int_{-g_n}^{g_n} \frac{ \frac{1}{n}\sum_{k=1}^{n}X_k^{(\gamma-1+iv)/H}}{\Gamma\left(\frac{\gamma+H-1+iv}{2H}\right) 2^{\frac{\gamma+2H-1+iv}{2H}}}  x^{-\gamma-iv} ~dv 
\end{equation}
for $x>0$ and $\max\{1-H,a\}< \gamma < b$. We can prove a convergence result for this estimator, similar to Theorem \ref{konvbessel}. 

\begin{satz}\label{konvgausssch}
Let $0\leq a<b$. Suppose $f_T\in\mathcal{C}(\beta,a,b)$ for some  $\beta\in(0,\pi)$. If there is some $\gamma\in(\max\{a,1-H,3/4\},b)$, then
		\begin{equation}\label{absch1}
		x^{2\gamma} \E [  |f_T(x)- \hat{f_n}(x)|^2 ]  \lesssim  \begin{cases}
		\frac{1}{n}e^{\pi g_n/(2H)} + e^{-2\beta g_n}, &\text{if} ~~\gamma\geq 1\\
		\frac{1}{n} g_n^{(1-\gamma)/H}e^{\pi g_n/(2H)}+ e^{-2\beta g_n}, &\text{if} ~~\gamma< 1
		\end{cases}
		\end{equation}
		for $n\rightarrow \infty$ and all $x>0$. Taking
		\begin{equation}\label{choicehn1}
		g_n=
		\begin{cases}
		\frac{2H\log n}{\pi+4H\beta}, &\text{if} ~~\gamma\geq 1\\
		\frac{2H\log n  - 2(\gamma-1)\log\log n}{\pi+4H\beta}, &\text{if} ~~\gamma< 1
		\end{cases},
		\end{equation}
		we obtain the polynomial convergence rate
		\begin{equation}\label{rate1}
	 \sup\limits_{x> 0}\left\{	x^{\gamma}\sqrt{\E [  |f_T(x)- \hat{f_n}(x)|^2 ]} \right\} \lesssim   \begin{cases}
		n^{-\frac{2H\beta}{\pi+4H\beta}}, &\text{if} ~~\gamma\geq 1\\
		n^{-\frac{2H\beta}{\pi+4H\beta}} (\log n)^{\frac{(1-\gamma)2\beta}{\pi+4H\beta}}, &\text{if} ~~\gamma< 1
		\end{cases}
		\end{equation}
		for $n\rightarrow \infty$.
\end{satz}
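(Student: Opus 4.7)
The overall strategy is to mimic the proof of Theorem \ref{konvbessel}: bound the variance via Theorem \ref{abschaetzung varianz}, bound the bias via Theorem \ref{abschaetzung erwartungswert fuer klasse}, sum both contributions, and then balance the two by choosing $g_n$. The new feature compared with the Bessel case is that $H$ is arbitrary and that the argument of the Gamma function in the denominator of \eqref{schaetzergausssch} now has real part $(\gamma+H-1)/(2H)$ rather than $\gamma+d/2-1$, which is what forces the case distinction in $\gamma$.

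First, I would plug $Y_1\sim \mathcal N(0,1)$ (i.e.\ $d=1$ in Example \ref{bspmellintrans}(ii)) into the variance bound of Theorem \ref{abschaetzung varianz}. The essential quantity is
\[
\int_{-g_n}^{g_n} \frac{dv}{\bigl|\mathcal M[|Y_1|]\bigl(\tfrac{\gamma+H-1+iv}{H}\bigr)\bigr|}
= c\int_{-g_n}^{g_n} \frac{dv}{\bigl|\Gamma\bigl(\tfrac{\gamma+H-1+iv}{2H}\bigr)\bigr|}.
\]
The real part of the argument is $(\gamma+H-1)/(2H)$, the imaginary part is $v/(2H)$. Using Stirling's asymptotics in the form $|\Gamma(\alpha+i\tau)|^{-1}\sim\sqrt{2\pi}^{-1}|\tau|^{1/2-\alpha}e^{\pi|\tau|/2}$ as $|\tau|\to\infty$ (this is exactly what the auxiliary Lemma \ref{abschgamma} referenced in the Bessel proof provides), the integrand behaves like $|v|^{(1-\gamma)/(2H)}e^{\pi|v|/(4H)}$. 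Integrating and squaring yields
\[
\Var[x^\gamma\hat f_n(x)] \lesssim \tfrac1n e^{\pi g_n/(2H)}\quad\text{if }\gamma\geq 1, \qquad
\Var[x^\gamma\hat f_n(x)] \lesssim \tfrac1n g_n^{(1-\gamma)/H} e^{\pi g_n/(2H)}\quad\text{if }\gamma<1,
\]
where the hypothesis $\gamma>3/4$ enters to ensure the moment $\E[X_1^{(2\gamma-2)/H}]=\M[T](2\gamma-1)\M[|Y_1|]((2\gamma-2)/H+1)$ appearing in Theorem \ref{abschaetzung varianz} is finite (the shifted argument $(2\gamma-2)/H+1$ for the Gaussian's Mellin transform requires $\gamma>1-H/2$, and $\gamma>1-H$ is already assumed; together with the constraint coming from applying the pole-free region for $\Gamma$, the threshold $\gamma>3/4$ emerges in the fBm setting). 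Squaring the bias bound of Theorem \ref{abschaetzung erwartungswert fuer klasse} gives $\E[f_T(x)-\hat f_n(x)]^2\lesssim x^{-2\gamma}e^{-2\beta g_n}$, and adding to the variance produces \eqref{absch1}.

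The final step is to optimize $g_n$. If $\gamma\geq 1$ the two terms are $n^{-1}e^{\pi g_n/(2H)}$ and $e^{-2\beta g_n}$; setting them equal gives $g_n=(2H\log n)/(\pi+4H\beta)$ and a rate $n^{-4H\beta/(\pi+4H\beta)}$, whose square root is \eqref{rate1}. If $\gamma<1$, the extra polynomial factor $g_n^{(1-\gamma)/H}$ turns the balancing equation into a transcendental one, which I would solve by the standard asymptotic ansatz $g_n=\alpha\log n+\delta\log\log n$; matching the logarithmic correction dictates $\alpha=2H/(\pi+4H\beta)$ and $\delta=-2(\gamma-1)/(\pi+4H\beta)$, giving the stated $g_n$ and the extra $(\log n)^{(1-\gamma)2\beta/(\pi+4H\beta)}$ factor in the rate.

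The main obstacle I expect is not the variance analysis itself but the careful case distinction in the integral $\int_{-g_n}^{g_n}|\Gamma((\gamma+H-1+iv)/(2H))|^{-1}dv$, specifically isolating when the polynomial prefactor from Stirling is negligible (i.e.\ $\gamma\geq 1$) versus when it genuinely contributes ($\gamma<1$), and then propagating this distinction cleanly through the rate optimization so that the logarithmic correction in $g_n$ has the precise form \eqref{choicehn1}. Everything else is a direct adaptation of the Bessel argument with $H=1/2$ and $d=1$ replaced by general $H$ and $d=1$.
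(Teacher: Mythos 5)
Your approach matches the paper's: specialize the variance bound of Theorem~\ref{abschaetzung varianz} to the Gaussian ($d=1$) case, observe that the $\Gamma$-argument $(\gamma+H-1+iv)/(2H)$ has real part $\geq 1/2$ exactly when $\gamma\geq 1$, so that Corollary~\ref{abschgamma}(ii) vs.~(i) governs the case split and produces the extra factor $g_n^{(1-\gamma)/H}$ in the variance when $\gamma<1$, then add the squared bias bound from Theorem~\ref{abschaetzung erwartungswert fuer klasse} and optimize $g_n$. The variance and bias bounds you write down are correct, and the case $\gamma\geq 1$ is handled exactly as in Theorem~\ref{konvbessel}.

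One point you glossed over by asserting rather than computing: for $\gamma<1$, equating the $\log n$-powers of $n^{-1}g_n^{(1-\gamma)/H}e^{\pi g_n/(2H)}$ and $e^{-2\beta g_n}$ under the ansatz $g_n=\alpha\log n+\delta\log\log n$ forces $\delta=+2(\gamma-1)/(\pi+4H\beta)$ (which is negative, since $\gamma<1$), \emph{not} $\delta=-2(\gamma-1)/(\pi+4H\beta)$ as you state. Intuitively, the extra polynomial factor $g_n^{(1-\gamma)/H}$ inflates the variance, so one must shrink $g_n$ slightly compared with the $\gamma\geq 1$ choice; the sign you (and the paper's display~\eqref{choicehn1}) give enlarges $g_n$ instead. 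If you substitute the $g_n$ as written into~\eqref{absch1}, the variance term dominates with a strictly larger $\log n$-power than~\eqref{rate1} claims, so the stated rate does not follow; with the corrected sign both terms balance and yield precisely $n^{-2H\beta/(\pi+4H\beta)}(\log n)^{2\beta(1-\gamma)/(\pi+4H\beta)}$. This appears to be a sign typo in the paper that your outline reproduced without verifying; carrying out the final substitution would have exposed it.
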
 
\begin{proof}
The proof is analogous to the one of Theorem \ref{konvbessel} except for the upper bound on variance which is in this case
\begin{align*}
\Var[ x^\gamma \hat f_n(x) ]
&\leq \frac{C_0(\gamma,H) H^2 }{\pi 2^{\frac{\gamma+2H-1}{2H}}n} \times
\begin{cases}
(C_1(\gamma,H)+C_2e^{\frac{\pi g_n}{4H}})^2, &\text{if} ~~\gamma\geq 1\\
C_1^2\left(2H/g_n\right)^{\frac{\gamma-1}{H}}e^{\frac{\pi g_n}{2H}}, &\text{if} ~~\gamma< 1
\end{cases}
\end{align*} 
for some $C_0(\gamma,H)>0$. Combining this with the bound on the bias from Lemma \ref{abschaetzung erwartungswert fuer klasse}(i) we obtain \eqref{absch1}. Plugging \eqref{choicehn1} into \eqref{absch1} gives the rate \eqref{rate1}.
\end{proof}
Taking $H=1/2$ in Theorem \ref{konvgausssch} we obtain the same rates as for Bessel processes (see Theorem \ref{konvbessel}). For smaller $H$ the rate is worse and for greater $H$ it is better. Note that we work with observations of $|Y_T|$ rather than $Y_T$.

\subsection{Gamma Distributed Processes}\label{subsecsquaredbessel}
Let $Y=(Y_t)_{t\geq 0}$ be $H$-ss with càdlàg paths such that $Y_1$ has Gamma density \eqref{gammadichte} with $r=1$. We can easily generalize to the case $r>0$, by considering the process ${(\tilde Y_t)_{t\geq 0} := (r Y_t)_{t\geq 0}}$ and modifying our observations to $\tilde X_i :=r X_i$. As an example consider the so-called \textit{square of a Bessel process} with dimension $d$ starting at $0$ (see \cite[Chapter XI, §1]{yor}). Considering Example \ref{bspmellintrans}(i) estimator \eqref{schaetzer} takes the form
\begin{equation} \label{schaetzergamma}
\hat{f_n}(x)=  \frac{\Gamma(\sigma)}{2{\pi}} \int\limits_{-g_n}^{g_n} \frac{ \frac{1}{n}\sum_{k=1}^{n}X_k^{(\gamma-1+iv)/H}}{\Gamma\left(\frac{\sigma H+\gamma-1+iv}{H} \right)}  x^{-\gamma-iv} ~dv 
\end{equation}
for $x>0$ and $\max\{1-\sigma H ,a\}< \gamma < b$. We can prove a convergence result for this estimator, that is similar to Theorems \ref{konvbessel} and \ref{konvgausssch}.

\begin{satz}\label{konvgamma}
Let $0\leq a<b$. Suppose $f_T\in\mathcal{C}(\beta,a,b)$ for some  $\beta\in(0,\pi)$. If there is some $\gamma\in(\max\{a,1-\sigma H, 1-\sigma/4 \},b)$ with $2\gamma-1\in(a,b)$, then
		\begin{equation}\label{abschgamma1}
		x^{2\gamma} \E [  |f_T(x)- \hat{f_n}(x)|^2 ]  \lesssim  \begin{cases}
		\frac{1}{n} e^{\frac{\pi g_n}{H}}+ e^{-2\beta g_n}, &\text{if} ~~\gamma\geq 1-\sigma H+\frac{H}{2}\\
		\frac{1}{n} g_n^{1-\frac{2(\gamma+\sigma H-1)}{H}}e^{\frac{\pi g_n}{H}}+ e^{-2\beta g_n},&\text{if} ~~\gamma< 1-\sigma H+\frac{H}{2}
		\end{cases}
		\end{equation}
		for $n\rightarrow \infty$ and all $x > 0$. If
		\begin{equation}\label{choicehngamma1}
		g_n=
		\begin{cases}
		\frac{\log n}{\frac{\pi}{H}+2\beta}, &\text{if} ~~\gamma\geq 1-\sigma H+\frac{H}{2}\\
		\frac{\log n  - \left(1-\frac{2(\gamma+\sigma H-1)}{H}\right)\log\log n}{\frac{\pi}{H}+2\beta}, &\text{if} ~~\gamma< 1-\sigma H+\frac{H}{2}
		\end{cases},
		\end{equation}
	then
		\begin{equation}\label{rategamma1}
\sup\limits_{x> 0}\left\{	x^{\gamma} \sqrt{\E [  |f_T(x)- \hat{f_n}(x)|^2 ]}\right\} \lesssim  \begin{cases}
		n^{-\frac{\beta}{\frac{\pi}{H}+2\beta}}, &\text{if} ~\gamma\geq 1-\sigma H+\frac{H}{2}\\
		n^{-\frac{\beta}{\frac{\pi}{H}+2\beta}} (\log n)^{-k}, &\text{if} ~\gamma< 1-\sigma H+\frac{H}{2}
		\end{cases}
		\end{equation}
		for $n\rightarrow \infty$, where $k=\frac{\beta}{\frac{\pi}{H}+2\beta} \left(\frac{2(\gamma+\sigma H-1)}{H}-1\right)$.
\end{satz}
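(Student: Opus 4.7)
The plan is to follow the same template as in the proofs of Theorem \ref{konvbessel} and Theorem \ref{konvgausssch}: bound the bias via Lemma \ref{abschaetzung erwartungswert fuer klasse}, bound the variance via Lemma \ref{abschaetzung varianz}, and then optimize the truncation parameter $g_n$. The additional restrictions on $\gamma$ serve to place the arguments of $\M[T]$ and $\M[|Y_1|]$ inside their strips of holomorphy and to make the Mellin moments appearing in the variance bound finite.

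The bias is immediate: since $f_T\in\mathcal{C}(\beta,a,b)$, Lemma \ref{abschaetzung erwartungswert fuer klasse} gives $|f_T(x)-\E[\hat f_n(x)]|\lesssim x^{-\gamma}e^{-\beta g_n}$. For the variance, I would substitute $\M[Y_1](s)=\Gamma(s+\sigma-1)/\Gamma(\sigma)$ (with $r=1$) into Lemma \ref{abschaetzung varianz}, which reduces the problem to estimating
\begin{equation*}
I_n:=\int_{-g_n}^{g_n}\frac{dv}{\bigl|\Gamma\bigl(\tfrac{\gamma+\sigma H-1+iv}{H}\bigr)\bigr|}.
\end{equation*}
Using Stirling's asymptotic $1/|\Gamma(c+it)|\sim (2\pi)^{-1/2}|t|^{1/2-c}e^{\pi|t|/2}$ as $|t|\to\infty$ (the same gamma-function lemma that is invoked in the proof of Theorem \ref{konvbessel}) with $c=(\gamma+\sigma H-1)/H$, the integrand behaves like $|v/H|^{1/2-c}e^{\pi|v|/(2H)}$. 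Evaluating the truncated integral by Laplace's method gives $I_n\lesssim g_n^{1/2-c}e^{\pi g_n/(2H)}$. The exponent $1/2-c$ changes sign precisely at $\gamma=1-\sigma H+H/2$: in the regime $\gamma\geq 1-\sigma H+H/2$ the polynomial factor is bounded, while in the regime $\gamma<1-\sigma H+H/2$ one retains, after squaring, the factor $g_n^{1-2(\gamma+\sigma H-1)/H}$. Plugging $I_n^2$ into the variance bound and combining with the squared bias produces \eqref{abschgamma1}.

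For the rate \eqref{rategamma1} I would balance the variance contribution against $e^{-2\beta g_n}$. In the regime $\gamma\geq 1-\sigma H+H/2$ this is the straightforward balance $\tfrac{1}{n}e^{\pi g_n/H}=e^{-2\beta g_n}$, yielding $g_n=\log n/(\pi/H+2\beta)$ and a polynomial rate. In the other regime the polynomial correction leads to the transcendental equation
\begin{equation*}
g_n\bigl(\tfrac{\pi}{H}+2\beta\bigr)=\log n-\bigl(1-\tfrac{2(\gamma+\sigma H-1)}{H}\bigr)\log g_n,
\end{equation*}
which is solved to leading order by a single iteration with $\log g_n\sim\log\log n$. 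This recovers \eqref{choicehngamma1} and produces the rate with the logarithmic correction of exponent $-k$.

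The main technical obstacle is the Stirling-type evaluation of $I_n$ and the isolation of the exact threshold $\gamma=1-\sigma H+H/2$ where the asymptotic behaviour bifurcates; the rest is a mechanical transcription of the strategy used for Bessel and Gaussian processes, with the only novelty lying in the bookkeeping of the exponents dictated by the gamma Mellin transform.
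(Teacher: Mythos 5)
Your proposal is correct and follows the same route as the paper, which in the Gamma case merely states the two-regime variance bound and refers back to the Bessel proof for the remaining computations. Your identification of the dichotomy threshold $\gamma = 1-\sigma H + H/2$ via the sign of $1/2-c$ with $c=(\gamma+\sigma H-1)/H$ in the Stirling estimate for $1/|\Gamma(c+iv/H)|$ is exactly the mechanism the paper relies on (via Lemma \ref{abschgamma0}/Corollary \ref{abschgamma}), and the subsequent balancing of $\tfrac{1}{n}g_n^{1-2c}e^{\pi g_n/H}$ against $e^{-2\beta g_n}$, solved to leading order with $\log g_n\sim\log\log n$, reproduces \eqref{choicehngamma1} and \eqref{rategamma1} precisely.
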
 
\begin{proof}
In this case he upper bound on variance becomes
\begin{align*}
\Var[ x^\gamma \hat f_n(x) ] &\lesssim  
\begin{cases}
\frac{1}{n} e^{\frac{\pi g_n}{H}}, &\text{if} ~~\gamma\geq 1-\sigma H+\frac{H}{2}\\
\frac{1}{n} g_n^{1-\frac{2(\gamma+\sigma H-1)}{H}}e^{\frac{\pi g_n}{H}}, &\text{if} ~~\gamma< 1-\sigma H+\frac{H}{2}
\end{cases}, 
\end{align*} 
Rest is again analogue to the proof of Theorem \ref{konvbessel}.
\end{proof}

\section{Optimality}\label{optimalitysection}
The rates from Theorems \ref{konvbessel}, \ref{konvgausssch} and \ref{konvgamma} are optimal in the minimax sense.
\begin{satz}\label{optimalitysatz}
	For all $\beta\in(0,\pi)$ and $0<a<b<\pi/\beta$ there is $x>0$ such that
	\begin{equation}\label{minimaxinequality}
	\liminf\limits_{n\ra\infty} \psi_n^{-2} \inf\limits_{\hat f_n} \sup\limits_{f\in\mathcal C(\beta,a,b)} \E[|\hat f_n(x)-f(x)|^2] \geq c
	\end{equation}
	for some $c>0$, where infimum is over all estimators based on samples of $Y_T$ with 
	\begin{enumerate}
		\item[(i)] a Bessel process $Y$ with dimension $d\in [1,\infty)$ and $\psi_n=n^{-\frac{\beta}{\pi+2\beta}}$;
		\item[(ii)] a $H$-ss. Gaussian process $Y$ ($H\in(0,2)$) and $\psi_n=n^{-\frac{\beta}{\frac{\pi}{2H}+2\beta}}$;
		\item[(iii)] a $H$-ss. Gamma distributed process $Y$ ($H\in(0,2)$) and $\psi_n=n^{-\frac{\beta}{\frac{\pi}{H}+2\beta}}$.
	\end{enumerate}
\end{satz}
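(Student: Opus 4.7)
The plan is to apply the classical two-hypothesis (Le Cam) method. Take a baseline density $f_0\in\mathcal{C}(\beta,a,b)$ that is strictly positive and bounded on a neighbourhood of a fixed point $x_0>0$; for concreteness one may take a suitably normalised density of the form $c\,x^{\sigma-1}e^{-x}$ truncated away from the origin. Construct a perturbed candidate $f_{1,n}=f_0+\delta_n h_n$, where $h_n$ is a real-valued oscillatory ``Mellin wave packet'' whose transform on the line $\Re s=\gamma_0$ (for some $\gamma_0\in(a,b)$) is concentrated at the symmetric frequencies $\pm v_n$, with amplitude of order $e^{-\beta v_n}$. Standard constructions (analogous to the super-smooth Fourier setting) produce such $h_n\in\mathcal{C}(\beta,a,b)$; after subtracting the small multiple $(\int h_n) f_0$ one may assume $\int h_n=0$, and for $\delta_n>0$ small and $n$ large the function $f_{1,n}$ is a genuine probability density in $\mathcal{C}(\beta,a,b)$. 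A direct Mellin inversion gives
\begin{align*}
|f_0(x_0)-f_{1,n}(x_0)|=\delta_n|h_n(x_0)|\asymp\delta_n e^{-\beta v_n}.
\end{align*}

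To compare the two sampling laws, let $p_j$ denote the density of $|Y_T|$ under $f_T=f_{j,n}$. From $|Y_T|\stackrel{d}{=}T^H|Y_1|$ and Theorem~\ref{mellinprodukt} one has $\M[p_0-p_{1,n}](\gamma+iv)=\delta_n\M[h_n](H\gamma-H+1+iHv)\M[|Y_1|](\gamma+iv)$, whence Theorem~\ref{parseval} yields
\begin{align*}
\|p_0-p_{1,n}\|_{L^{2}(x^{2\gamma-1}dx)}^{2}\asymp\delta_n^{2}\int_{\R}|\M[h_n](H\gamma-H+1+iHv)|^{2}\,|\M[|Y_1|](\gamma+iv)|^{2}\,dv.
\end{align*}
Stirling's estimate $|\Gamma(\sigma+i\tau)|\sim\sqrt{2\pi}|\tau|^{\sigma-1/2}e^{-\pi|\tau|/2}$ applied to Example~\ref{bspmellintrans} gives $|\M[|Y_1|](\gamma+iv)|^{2}\sim|v|^{\ast}e^{-\pi|v|/2}$ for the Bessel and Gaussian cases and $|v|^{\ast}e^{-\pi|v|}$ for the Gamma case. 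Combined with $|\M[h_n]|^{2}\asymp e^{-2\beta v_n}$ near the bump and the dilation by $H$ inside its argument, the integral is $\asymp\delta_n^{2}e^{-\alpha v_n}$, with $\alpha=\pi+2\beta$, $\alpha=\pi/(2H)+2\beta$ and $\alpha=\pi/H+2\beta$ in cases (i), (ii) and (iii) respectively. Since $p_0$ is bounded below on the relevant domain, $\chi^{2}(P_0,P_{1,n})\lesssim\|p_0-p_{1,n}\|_{L^{2}}^{2}$.

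Fix $\delta_n=\delta$ small enough, and set $v_n=\alpha^{-1}\log n$. Then $n\chi^{2}(P_0,P_{1,n})\leq C\delta^{2}$, uniformly in $n$, so Le Cam's two-point inequality yields
\begin{align*}
\inf_{\hat f_n}\sup_{f\in\mathcal{C}(\beta,a,b)}\E|\hat f_n(x_0)-f(x_0)|^{2}\gtrsim|f_0(x_0)-f_{1,n}(x_0)|^{2}\asymp e^{-2\beta v_n}=n^{-2\beta/\alpha},
\end{align*}
which is exactly $\psi_n^{2}$ in the three cases after substituting the corresponding $\alpha$.

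The principal obstacle is the explicit construction of $h_n$. It must simultaneously (a) be real with Mellin transform sharply localised near $\pm v_n$, so that both the lower bound $|h_n(x_0)|\asymp e^{-\beta v_n}$ and the upper bound on the Parseval integral are attained; (b) extend holomorphically to the sector $S_\beta$ with the correct boundary growth so that $h_n\in\mathcal{C}(\beta,a,b)$ (this is where the restriction $b<\pi/\beta$ enters, ensuring that the Mellin inversion contour can be shifted within $(a,b)$ using the decay $e^{-\beta|\Im s|}$); and (c) be small enough in sup-norm that $f_0+\delta h_n\geq 0$. A workable realisation is to invert the Mellin transform of a ``Hardy bump'', e.g.\ a holomorphic Cauchy-kernel-type atom modulated by the factor $e^{-\beta s}$ and shifted to the frequency $\pm v_n$. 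Once $h_n$ is in hand, the three subcases reduce to mechanical Parseval bookkeeping with the appropriate Mellin transform from Example~\ref{bspmellintrans}.
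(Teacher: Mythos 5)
Your high-level strategy — reduce to a two-hypothesis test, separate the hypotheses at a point by $\asymp e^{-\beta v_n}$, and control the $\chi^2$-divergence between the observation laws through a Mellin--Parseval computation, then choose $v_n\asymp\log n$ — is exactly the skeleton of the paper's proof, which invokes a Tsybakov-type lower bound (Theorem~\ref{reductiontotwohypos}) and a $\chi^2$-criterion. The rates $\alpha=\pi+2\beta$, $\pi/(2H)+2\beta$ and $\pi/H+2\beta$ you extract from Stirling's estimate for the three processes also agree with the paper.

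However, there are two genuine gaps. First, you leave the perturbation $h_n$ unconstructed, and this is precisely where the work lies. The paper does not add a free oscillatory bump $\delta_n h_n$ to $f_0$; it takes $f_{0,M}=q$ with $q(x)=\frac{\sin\beta}{\beta}\frac{1}{1+x^{\pi/\beta}}$ (whose Mellin transform is $\sin\beta/\sin(\beta z)$, holomorphic and $e^{-\beta|\Im z|}$-decaying on the strip, which is what forces the constraint $b<\pi/\beta$), and perturbs by a \emph{Mellin convolution} $f_{1,M}=q+\delta\,(q\odot\rho_M)$ with the explicit log-normal-modulated sine $\rho_M(x)=\frac{1}{\sqrt{2\pi}}e^{-\log^2(x)/2}\frac{\sin(M\log x)}{x}$, whose Mellin transform is a difference of Gaussians centred at $\pm M$. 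Convolving with $q$ rather than adding $\rho_M$ directly is essential: it guarantees $\delta|q\odot\rho_M|\le q$ (hence $f_{1,M}\ge 0$ and $\int f_{1,M}=1$ for small $\delta$), forces the correct tail and boundary behaviour so that $f_{1,M}\in\mathcal C(\beta,a,b)$, and makes all Mellin transforms closed-form so the Parseval integral can actually be evaluated. Your remark that a ``Hardy bump'' or Cauchy-kernel atom would work is plausible in spirit but is not a proof, and you would still need to verify holomorphic extendability to $S_\beta$, the growth conditions defining $\mathcal C(\beta,a,b)$, nonnegativity, and normalisation.

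Second, the inequality ``$\chi^2(P_0,P_{1,n})\lesssim\|p_0-p_{1,n}\|_{L^2}^2$ since $p_0$ is bounded below on the relevant domain'' is false as stated: $p_0$ is a density on $\R_+$ and tends to $0$ at $0$ and $\infty$, so it is not bounded below, and the $\chi^2$-divergence integrates $(p_1-p_0)^2/p_0$ over all of $\R_+$. The paper resolves this by deriving the explicit polynomial lower bound $p_{0,M}(x)\gtrsim x^{-2\pi/\beta+1}$ as $x\to\infty$ (for the Bessel case, via a Laplace-type computation) and then bounding
$\chi^2\lesssim\int(p_1-p_0)^2\,dx+\int x^{2\pi/\beta-1}(p_1-p_0)^2\,dx$,
evaluating both weighted $L^2$ norms by Parseval. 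This weighted splitting is not optional; without it, or some substitute control of the small-$p_0$ region, the $\chi^2$ bound does not follow from the unweighted $L^2$ estimate you wrote down.
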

See Section \ref{secoptimalityproof} for the proof of this theorem. A similar optimality result was obtained in \cite{Belomestnyasymnorm} for the case where the absolute value of a one-dimensional Brownian motion is observed. \eqref{minimaxinequality} means that for each estimator $\hat f_n$, that we may construct with our observations, there is a true density $f\in\mathcal C(\beta,a,b)$ such that
$$ \sqrt{\E[|\hat f_n(x)-f(x)|^2]}\gtrsim \psi_n,\quad n\ra\infty $$
for some $x>0$, i.e. it is impossible to construct an estimator with a convergence rate (w.r.t. $L^2$-distance) faster than $\psi_n$ for all $f\in\mathcal C(\beta,a,b)$ and all $x>0$.

\section{Simulation Study}\label{simulationseasy}
In this Section we test our estimator \eqref{schaetzerbessel} with some simulated data. Consider a Bessel process with dimension $d=5$ and a Gamma$(2,1)$ distributed stopping time $T$, i.e. $T$ has the density
$$f(x)=xe^{-x}, \quad x\geq 0.$$
In order to evaluate the estimator \eqref{schaetzerbessel} we choose $\gamma=0.7$. Take the cut-off parameter $g_n=\frac{\log(n)}{(\pi+2\beta)}$ (in accordance with \eqref{choicehninbessel}) and $\beta=0$. To choose $\beta$ small appears counterintuitive at first because we showed in Theorem \ref{konvbessel} that the convergence rate is better for large $\beta$. However, in our examples the choice $\beta=0$ delivers the best results. This can be explained as follows: Our bound on the bias of estimator $\hat f_n$ contains the constant $L$ (see \eqref{Lkonstante}) as a factor. This constant is growing in $\beta$ and seems to make a crucial contribution to the overall error. We refer to \cite{Belomestnyasymnorm} and \cite{diss} for an alternative choice of $g_n$ based purely on the data.\\In order to test the performance of $\hat f_n$ we compute it based on 100 independent samples of $BES_T$ of size $n\in\{1000,5000,10000,50000\}$. In Figure \ref{fig2} we see the resulting box-plots of the loss.
\begin{figure}[h!]
	\centering
	\includegraphics[width=0.6\linewidth,height=0.3\textheight]{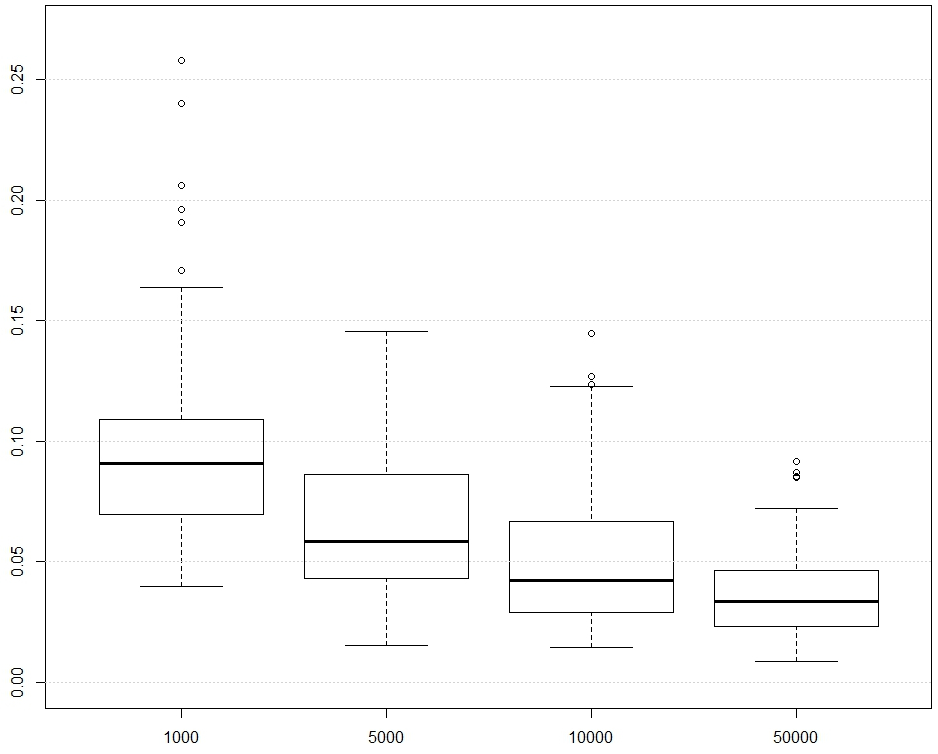}
	\captionsetup{width=\linewidth}
	\caption{Box plot of the loss $\sup_{x\in [0,10]} \{|\hat f_n(x)-f_T(x)| \}$ for different sample sizes.}
	\label{fig2}
\end{figure}
Let us now demonstrate the performance of our estimator for different distributions of $T$. As examples we consider Exponential, Gamma, Inverse-Gaussian and Weibull distributions. To construct the estimate \eqref{schaetzerbessel} we choose $d=5$, $\gamma=0.8$, $n=1000$ and $g_n$ as before. Figure \ref{fig3} shows the densities of the six distributions and their 50 respective estimates based on 50 independent samples of $BES_T$ of size $n=500$.
\begin{figure}[h!]
	\centering
	\includegraphics[width=0.9\linewidth,height=0.3\textheight]{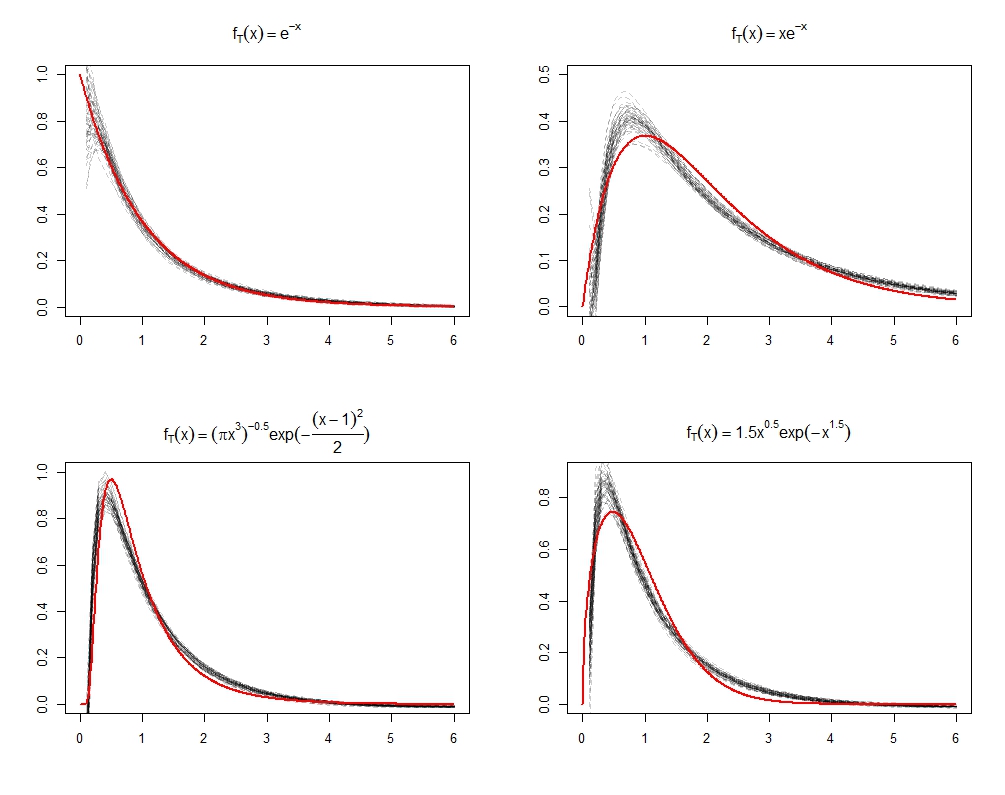}
	\caption{Estimated densities (red) and their 50 respective estimates (grey).}
	\label{fig3}
\end{figure}

\section{Proofs}\label{proofsec}
\subsection{Proof of Theorem \ref{asympnormality}}\label{proofasympnorm}
	We roughly imitate the proof of an analogous result for the special case ${d=1}$, $(a,b)=(0,1)$ found in \cite{Belomestnyasymnorm}. In distinction from \cite{Belomestnyasymnorm} we do not restrict ourselves to the case $x=1$ in the proof and provide the specific form of $\nu_n$ for all $x>0$.\\
	Let $x>0$. It suffices to show the Lyapunov condition, i.e. for a $\delta>0$:
	\begin{equation}\label{lyapunovcond}
	\lim\limits_{n\rightarrow \infty} \frac{\E[|Z_{n,1}-\E[ Z_{n,1}]|^{2+\delta}]}{n^{\delta/2}(\Var[Z_{n,1}])^{1+\delta/2}} =0.
	\end{equation}
	The claim \eqref{claimnormalit} follows from \eqref{lyapunovcond} with $\nu_n=\Var[Z_{n,1}]$. Note that $\E [Z_{n,1}]\ra f_T(x)$ for $n\ra\infty$ by monotone convergence and \eqref{exactsolforbessel} (if we choose $Y=BES$ there). So, \eqref{lyapunovcond} holds if we can prove, that $\Var[Z_{n,1}]\ra\infty$ and
	\begin{equation}\label{lyapunovcondsimple}
	\lim\limits_{n\rightarrow \infty} \frac{\E[|Z_{n,1}|^{2+\delta}]}{n^{\delta/2}(\Var[Z_{n,1}])^{1+\delta/2}} = 0.
	\end{equation}
	In any case of Lemma \ref{momentevonzn} (for $j=\delta+2$) we have
	\begin{equation} \label{abschez}
	\E[|Z_{n,1}|^{2+\delta}] \lesssim
	g_n^{c} e^{\pi (2+\delta)g_n/2}, \quad n\ra\infty
	\end{equation}
	for all $\delta\in\R_+$ and some $c>0$. Now we investigate the asymptotic behavior of $\Var[Z_{n,1}]$. Looking at \eqref{znk} we use Fubini's theorem to obtain
	\begin{align*}
	\Var[Z_{n,1}]=&~ \frac{\Gamma\left(\frac{d}{2}\right)^2}{\pi^2} \int_{-g_n}^{g_n}\int_{-g_n}^{g_n} \frac{\Cov[X_1^{2\gamma-1+iv},X_1^{2\gamma-1+iu}]dvdu}{(2x)^{2\gamma+i(v-u)}\Gamma\left(\gamma+\frac{d}{2}-1+iv\right)\Gamma\left(\gamma+\frac{d}{2}-1-iu\right) }   \\
	=&~ \frac{\Gamma\left(\frac{d}{2}\right)^2}{\pi^2} \int_{-g_n}^{g_n}\int_{-g_n}^{g_n} \frac{\E[X_1^{4\gamma-4+2i(v-u)}]dvdu}{(2x)^{2\gamma+i(v-u)}\Gamma\left(\gamma+\frac{d}{2}-1+iv\right)\Gamma\left(\gamma+\frac{d}{2}-1-iu\right) }   \\
	&- \frac{\Gamma\left(\frac{d}{2}\right)^2}{\pi^2} \int_{-g_n}^{g_n}\int_{-g_n}^{g_n} \frac{ (2x)^{-2\gamma-i(v-u)}\E [X_1^{2\gamma -2+ 2iv}]\E [X_1^{2\gamma-2 - 2iu}]dvdu }{\Gamma\left(\gamma+\frac{d}{2}-1+iv\right)\Gamma\left(\gamma+\frac{d}{2}-1-iu\right) }  \\
	=&~ \frac{\Gamma\left(\frac{d}{2}\right)^2}{\pi^2} \int_{-g_n}^{g_n}\int_{-g_n}^{g_n} \frac{\M[X_1](4\gamma+2i(v-u)-3)dvdu}{(2x)^{2\gamma+i(v-u)}\Gamma\left(\gamma+\frac{d}{2}-1+iv\right)\Gamma\left(\gamma+\frac{d}{2}-1-iu\right) }   \\
	&-\frac{\Gamma\left(\frac{d}{2}\right)^2}{\pi^2} \left|\int_{-g_n}^{g_n} \frac{\M[X_1](2\gamma -1+ 2iv) }{(2x)^{\gamma+iv}\Gamma\left(\gamma+\frac{d}{2}-1+iv\right)} dv \right|^2~=:R_1-R_2.
	\end{align*}
	By Example \ref{bspmellintrans}(ii) we can estimate
	\begin{align*}
	R_2 &\leq \frac{1}{\pi^2 x^{2\gamma}} \left(\int_{-g_n}^{g_n} \left| \M[T](\gamma + iv) \right| dv \right)^2 <C< \infty
	\end{align*}
	for some $C>0$ and further
	\begin{align*}
	R_1 &=  \frac{\Gamma\left(\frac{d}{2}\right)}{2\pi^2 x^{2\gamma}} \int_{-g_n}^{g_n}\int_{-g_n}^{g_n} \frac{\M[T](2\gamma-1+i(v-u)) \Gamma\left(2\gamma-2+\frac{d}{2}+i(v-u)\right) }{x^{i(v-u)}\Gamma\left(\gamma+\frac{d}{2}-1+iv\right)\Gamma\left(\gamma+\frac{d}{2}-1-iu\right) } dvdu.
	\end{align*}
	Our strategy now is to decompose the double integral defining $R_1$ into pieces that are easy to estimate. To that end let $\rho_n:=g_n^{\alpha}$, where $0<\alpha<1/2$ and define
	$$I^1_{n}:= \int_{-g_n}^{g_n}\int_{-g_n}^{g_n} \1_{|v-u|\geq \rho_n} \frac{\M[T](2\gamma-1+i(v-u)) \Gamma\left(2\gamma-2+\frac{d}{2}+i(v-u)\right) }{x^{i(v-u)}\Gamma\left(\gamma+\frac{d-2}{2}+iv\right)\Gamma\left(\gamma+\frac{d-2}{2}-iu\right) } dvdu.$$
	By Lemma \ref{abschgamma0} there are $C_1,C_2>0$ such that
	\begin{align*}
	\left|\Gamma\left(\gamma+(d/2)-1+iv\right)\right| &\geq C_1 \1_{|v|\leq 2 }+ C_2 \1_{|v|>2 } |v|^{\gamma-1+\frac{d-1}{2}} e^{-\pi |v|/2}\\
	\left|\Gamma\left(\gamma+(d/2)-1-iu\right)\right| &\geq  C_1 \1_{|u|\leq 2 }+ C_2 \1_{|u|>2 } |u|^{\gamma-1+\frac{d-1}{2}} e^{-\pi |u|/2}
	\end{align*}
	and $K_1,K_2>0$ such that
	\begin{align*}
	\Gamma\left(2\gamma-2+(d/2)+i(v-u)\right) \leq  K_1 \1_{|v-u|\leq 2 }+ K_2 \1_{|v-u|>2 } |v-u|^{2(\gamma-1)+\frac{d-1}{2}} e^{-\pi |v-u|/2}.
	\end{align*}
	With the help of these inequalities we deduce 
	\begin{align*}
	|I^1_{n}|
	\lesssim  g_n^{3|\gamma-1|+d-2} e^{\pi(g_n-\rho_n)/2} +g_n^{4|\gamma-1|+(3d-5)/2} e^{\pi(g_n-\frac{\rho_n}{2})}, \quad n\ra\infty.
	\end{align*}
	Similarly,
	\begin{align}
	\int_{-g_n}^{g_n}&\int_{-g_n}^{g_n} \1_{|u|\leq g_n-\rho_n}  \1_{|v-u|\geq \rho_n} \frac{\M[T](2\gamma-1+i(v-u)) \Gamma\left(2\gamma+\frac{d-4}{2}+i(v-u)\right) }{x^{i(v-u)}\Gamma\left(\gamma+\frac{d-2}{2}+iv\right)\Gamma\left(\gamma+\frac{d-2}{2}-iu\right) } dvdu \nonumber\\
	&\lesssim  g_n^{l} e^{\pi(g_n-\rho_n)} , \quad n\ra\infty \label{teil1vonI2}
	\end{align}
	and
	\begin{align}
	\int_{-g_n}^{g_n}&\int_{-g_n}^{g_n} \1_{|v|\leq g_n-\rho_n}  \1_{|v-u|\geq \rho_n} \frac{\M[T](2\gamma-1+i(v-u)) \Gamma\left(2\gamma+\frac{d-4}{2}+i(v-u)\right) }{x^{i(v-u)}\Gamma\left(\gamma+\frac{d-2}{2}+iv\right)\Gamma\left(\gamma+\frac{d-2}{2}-iu\right) } dvdu\nonumber\\
	&\lesssim  g_n^{l} e^{\pi(g_n-\rho_n)} , \quad n\ra\infty \label{teil2vonI2}.
	\end{align}
	for some $l\geq 0$. Combine \eqref{teil1vonI2} and \eqref{teil2vonI2} to obtain
	\begin{align*}
	I^2_{n} :=&~  \int_{-g_n}^{g_n}\int_{-g_n}^{g_n} \1_{|v-u|\leq \rho_n} \frac{\M[T](2\gamma-1+i(v-u)) \Gamma\left(2\gamma+\frac{d-4}{2}+i(v-u)\right) }{x^{i(v-u)}\Gamma\left(\gamma+\frac{d-2}{2}+iv\right)\Gamma\left(\gamma+\frac{d-2}{2}-iu\right) } dvdu  \\
	\lesssim&~  \int_{-g_n}^{g_n}\int_{-g_n}^{g_n} \1_{|v|\geq g_n-\rho_n}\1_{|u|\geq g_n-\rho_n} \1_{|v-u|\leq \rho_n}\\
	&\times \hspace*{-0.1cm} \frac{\M[T](2\gamma-1+i(v-u)) \Gamma\left(2\gamma+\frac{d-4}{2}+i(v-u)\right) }{x^{i(v-u)}\Gamma\left(\gamma+\frac{d-2}{2}+iv\right)\Gamma\left(\gamma+\frac{d-2}{2}-iu\right) } dvdu +\mathcal{O}( g_n^{l} e^{\pi(g_n-\rho_n)})\\
	=:&~I^3_{n}+\mathcal{O}( g_n^{l} e^{\pi(g_n-\rho_n)}).
	\end{align*}
	Next, we examine the asymptotic behavior of the integral $I^3_{n}$. To this end, we take advantage of Stirling's formula (Lemma \ref{gammaarggegeninf})
	\begin{align*}
	\Gamma\left(\gamma+\frac{d-2}{2}+iv\right)&= \left(\gamma+\frac{d-2}{2}+iv\right)^{\gamma+\frac{d-3}{2}+iv}e^{-\gamma-\frac{d-2}{2}-iv}\sqrt{2\pi}(1+\mathcal{O}(v^{-1}))
	\end{align*}
	for $v\ra\infty$. Consider the integrand of $I^3_{n}$. In the denominator it holds by means of the identity ${\log(iv)=\log(v)+\frac{i\pi}{2}}$ that
	\begin{align*}
	\Gamma (\gamma+(d/2)-1+iv)\Gamma(\gamma&+(d/2)-1-iu)\\
	&= 2\pi e^{iv\log v - iu\log u - i(v-u)}  
	e^{-\frac{\pi}{2}(u+v)}  (vu)^{\frac{2\gamma+d-3}{2}} (1+\mathcal{O}(v^{-1})+\mathcal{O}(u^{-1}))
	\end{align*}
	for $u,v\ra\infty$. On the set
	$$ \left\{ |u|\geq g_n-\rho_n \right\} \cap \left\{ |v|\geq g_n-\rho_n \right\} \cap \{ |v-u|\leq \rho_n \}  \cap \{ v\geq 0, u\geq 0 \} $$
	we define $u=g_n -r$, $v=g_n -s$ with $0<r,s<\rho_n$, $|r-s|<\rho_n$ to obtain
	\begin{align*}
	\Gamma (\gamma+(d&/2)-1+iv)\Gamma(\gamma+(d/2)-1-iu)\\
	=&~ 2\pi\exp[i(g_n-s)\log(g_n-s)- i(g_n-r)\log(g_n-r) -i(r-s)]
	\\&\times g_n^{2\gamma+d-3} e^{-\pi g_n} e^{\pi(r+s)/2} (1+\mathcal O(g_n^{-1}))(1+\mathcal O(\rho_n g_n^{-1})+\mathcal O(\rho_n^2 g_n^{-1}))^{\gamma+(d-3)/2}.
	\end{align*}
	Note that due to the choice of $\rho_n$, we have $\rho_n g_n^{-1}\ra 0$ and $\rho^2_n g_n^{-1}\ra 0$. We use the asymptotic decomposition 
	\begin{align*}
	(g_n -s)\log(g_n-s)- (g_n -r)\log(g_n-r)-(r-s)=(r-s)\log(g_n)+\mathcal{O}(\rho_n^2/g_n)
	\end{align*}
	to obtain
	\begin{align*}
	\Gamma\left(\gamma+(d/2)-1+iv\right)\Gamma\left(\gamma+(d/2)-1-iu\right)
	=&~2\pi g_n^{2\gamma+d-3}\exp(i(r-s)\log(g_n))\\
	&\times e^{-\pi g_n} e^{\pi(r+s)/2} (1+ O(\rho_n^2 g_n^{-1})).
	\end{align*}
	Analogously, on the set
	$$ \left\{ |u|\geq g_n-\rho_n \right\} \cap \left\{ |v|\geq g_n-\rho_n \right\} \cap \{ |v-u|\leq \rho_n \}  \cap \{ v\leq 0, u\leq 0 \} $$
	we define $u=-g_n +r$, $v=-g_n +s$ with $0<r,s<\rho_n$, $|r-s|<\rho_n$ to obtain
	\begin{align*}
	\Gamma\left(\gamma+(d/2)-1+iv\right)\Gamma\left(\gamma+(d/2)-1-iu\right)
	=&~ 2\pi g_n^{2\gamma+d-3}\exp(-i(r-s)\log(g_n))\\
	&\times e^{-\pi g_n} e^{\pi(r+s)/2} (1+ O(\rho_n^2 g_n^{-1})).
	\end{align*}
	Hence, $I^3_{n}$ can be decomposed as follows: 
	\begin{align*}
	I^3_{n} =&~ \frac{1}{2\pi}\frac{\exp(\pi g_n)}{g_n^{2\gamma+d-3}} \int_{0}^{\rho_n} \int_{0}^{\rho_n} \1_{|r-s|\leq \rho_n} x^{i(s-r)} e^{-\frac{\pi}{2}(r+s)} \Gamma(2\gamma+\frac{d-4}{2}+i(r-s))  \\
	&\times \M[T](2\gamma-1+i(r-s)) \exp(i(s-r)\log(g_n)) (1+O(\rho_n^2 g_n^{-1}))^{-1}drds \\
	=&~ \frac{1}{2\pi}\frac{\exp(\pi g_n)}{g_n^{2\gamma+d-3}} \{\Re[I^4_{n}]+O(\rho_n^2 g_n^{-1})] \},
	\end{align*}
	where
	\begin{align*}
	I^4_{n}
	:=&~ \int_{0}^{\rho_n} \int_{0}^{\rho_n} \1_{|r-s|\leq \rho_n} e^{-\frac{\pi}{2}(r+s)} \Gamma(2\gamma+\frac{d-4}{2}+i(r-s)) \\
	&\times \M[T](2\gamma-1+i(r-s)) \exp(i(s-r)\log(g_n)) x^{i(r-s)} drds \\
	=&~ \int_{0}^{\rho_n} e^{-\pi v}R_n(v)dv
	\end{align*}
	with
	\begin{equation}\label{fourierintegraltype}
	R_n(v) := \int_0^{\rho_n-v} e^{-\frac{\pi}{2} u}x^{iu} \Gamma(2\gamma+\frac{d-4}{2}+iu) \M[T](2\gamma-1+iu) e^{iu\log(g_n)}du.
	\end{equation}
	The integral in \eqref{fourierintegraltype} allows a series representation via Lemma \ref{erdelyi}. In fact,
	\begin{align*}
	R_n(v)=&~	i  \Gamma\left(2\gamma+\frac{d-4}{2}\right) \M[T](2\gamma-1) \log^{-1}(g_n) \\
	&-\frac{d}{du} [e^{-\frac{\pi}{2} u}x^{iu} \Gamma\left(2\gamma+\frac{d-4}{2}+iu\right) \M[T](2\gamma-1+iu)]\bigg|_{u=0}\log^{-2}(g_n) \\
	&+\O(\log^{-3}(g_n))  
	\end{align*} 
	uniformly in $v$. Thus,
	\begin{align*}
	\Re[I^4_{n}]= \frac{c}{\pi}\log^{-2}(g_n)+\O(\log^{-3}(g_n))
	\end{align*}
	holds with
	\begin{align}
	c:&= \Re\left[ \frac{d}{du} e^{-\frac{\pi}{2} u}x^{iu} \Gamma(2\gamma+\frac{d-4}{2}+iu)\M[T](2\gamma-1+iu)\bigg|_{u=0}\right] \nonumber\\
	& = \frac{\pi}{2} \Gamma(2\gamma+\frac{d-4}{2}) \M[T](2\gamma-1).\label{konstanteinasympnorm}
	\end{align}
	Summing up the auxiliary quantities introduced above we get
	\begin{align*}
	\Var[Z_{n,1}]=&~  \frac{\Gamma\left(\frac{d}{2}\right)}{2\pi^2 x^{2\gamma}} \left[g_n^{-2\gamma+d-3} e^{\pi g_n} \{\frac{c}{\pi}\log^{-2}(g_n)+\O(\log^{-3}(g_n))+\O(\rho_n^2 g_n^{-1}) \}   \right.\\ 
	&+ \left. \O(g_n^{l}e^{\pi(g_n-\rho_n)})\right] +\O(1)
	\end{align*}
	and thus \eqref{varinnormal}. If $g_n\sim \log(n)$, then \eqref{varinnormal} and \eqref{abschez} imply \eqref{lyapunovcondsimple} and hence the claim.

\subsection{Proof of Theorem \ref{optimalitysatz}}\label{secoptimalityproof}
	The basic construction used in this proof is due to \cite{Belomestnyallgemein}, where it is used in the context of an observed Brownian motion. Define the $\chi^2$-divergence 
	\begin{equation}\label{chisquaredcrit}
	\chi^2(P_{1}|P_{0}):=\chi^2(p_{1,M}|p_{0,M}):=\int \frac{(q_{0}(x)-q_{1}(x))^2}{q_{0}(x)}dx 
	\end{equation}
	between two probability measures $P_0$ and $P_1$ with densities $q_0$ and $q_1$. The following general result forms the basis for the subsequent steps (see \cite{tsybakov} for a proof).
	
	\begin{satz}\label{reductiontotwohypos}
		Let $\{P_f|f\in\Theta\}$ be family of probability measures indexed by a non-parametrical class of densities $\Theta$. Suppose that $X_1,...,X_n$ are iid observations in model $n$ with $\L(X_1)\in\{P_f|f\in\Theta\}$. If there are $f_{n,0},f_{n,1}\in \Theta$ such that
		\begin{equation}\label{dxkrit}
		|f_{0,n}(x)-f_{1,n}(x)| \gtrsim \psi_n, \quad n\ra\infty
		\end{equation}
		and if 
		\begin{equation}\label{modifiedchisquredcrit}
		(1+\chi^2(P_{f_{1,n}}|P_{f_{0,n}}))^n \leq \alpha
		\end{equation}
		holds for some $\alpha>0$ independent of $n$, then
		\begin{equation}
		\liminf\limits_{n\ra\infty} \psi_n^{-2} \inf\limits_{\hat f_n} \sup\limits_{f\in\Theta} \E[|\hat f_n(x)-f(x)|^2] \geq c
		\end{equation}
		holds for some $c>0$, where the infimum is over all estimators.
	\end{satz}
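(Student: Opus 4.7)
The plan is to carry out the classical two-hypothesis (Le Cam) reduction, which localises the problem from the full non-parametric class $\Theta$ to the pair $\{f_{0,n},f_{1,n}\}$ and then turns an estimation lower bound into a testing lower bound. Since $f_{0,n},f_{1,n}\in\Theta$, I first replace the supremum over $\Theta$ by the half-sum over these two densities,
\begin{equation*}
\sup_{f\in\Theta}\E_f\bigl[|\hat f_n(x)-f(x)|^2\bigr] \;\geq\; \tfrac12\sum_{j=0,1}\E_{f_{j,n}}\bigl[|\hat f_n(x)-f_{j,n}(x)|^2\bigr],
\end{equation*}
where $\E_f$ denotes expectation under the product law $P_f^{\otimes n}$. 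Setting $s_n:=\tfrac12|f_{0,n}(x)-f_{1,n}(x)|$, hypothesis \eqref{dxkrit} gives $s_n\gtrsim\psi_n$, and Markov's inequality yields $\E_f[|\hat f_n(x)-f(x)|^2]\geq s_n^2\,P_f^{\otimes n}(|\hat f_n(x)-f(x)|\geq s_n)$ for each of the two choices $f\in\{f_{0,n},f_{1,n}\}$.

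Next I translate the estimation problem into a binary testing problem. Associate to $\hat f_n(x)$ the test $\phi_n:=\1\{|\hat f_n(x)-f_{1,n}(x)|<|\hat f_n(x)-f_{0,n}(x)|\}$. The triangle inequality at $x$ shows that $\{\phi_n=1\}\subseteq\{|\hat f_n(x)-f_{0,n}(x)|\geq s_n\}$ and $\{\phi_n=0\}\subseteq\{|\hat f_n(x)-f_{1,n}(x)|\geq s_n\}$, so the combined Markov bound dominates
\begin{equation*}
\tfrac12 s_n^2\bigl(P_{f_{0,n}}^{\otimes n}(\phi_n=1)+P_{f_{1,n}}^{\otimes n}(\phi_n=0)\bigr) \;\geq\; \tfrac12 s_n^2\bigl(1-\|P_{f_{0,n}}^{\otimes n}-P_{f_{1,n}}^{\otimes n}\|_{\mathrm{TV}}\bigr),
\end{equation*}
using the Neyman--Pearson identity $\inf_\phi\bigl(P_0(\phi=1)+P_1(\phi=0)\bigr)=1-\|P_0-P_1\|_{\mathrm{TV}}$.

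Finally I invoke \eqref{modifiedchisquredcrit} to bound the total variation. Tensorisation of $\chi^2$ gives $\chi^2(P_{f_{1,n}}^{\otimes n}\,|\,P_{f_{0,n}}^{\otimes n})=(1+\chi^2(P_{f_{1,n}}|P_{f_{0,n}}))^n-1\leq \alpha-1$, and Cauchy--Schwarz applied to the likelihood ratio produces $\|P-Q\|_{\mathrm{TV}}\leq \tfrac12\sqrt{\chi^2(Q|P)}$. Combining the three displays,
\begin{equation*}
\sup_{f\in\Theta}\E_f\bigl[|\hat f_n(x)-f(x)|^2\bigr] \;\geq\; \tfrac12 s_n^2\bigl(1-\tfrac12\sqrt{\alpha-1}\bigr) \;\gtrsim\; \psi_n^2,
\end{equation*}
and dividing by $\psi_n^2$ and passing to $\liminf$ gives a strictly positive $c$ depending only on $\alpha$ and on the implicit constant in \eqref{dxkrit}. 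The principal technical obstacle is guaranteeing that the factor $1-\|P_{0,n}^{\otimes n}-P_{1,n}^{\otimes n}\|_{\mathrm{TV}}$ is bounded away from zero: the Cauchy--Schwarz route above works whenever $\alpha<5$, while for general finite $\alpha$ one replaces it by the sharper chain $\mathrm{TV}\leq H\leq \sqrt{\chi^2}$ through the Hellinger affinity $\int\sqrt{dP\,dQ}=1-H^2/2$, which yields a strictly positive lower bound for every finite $\alpha$.
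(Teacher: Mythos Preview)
Your argument is the standard Le Cam two-point reduction and is essentially what the paper defers to: the paper does not give its own proof of this theorem but simply cites Tsybakov, and your write-up reproduces the relevant argument from that reference (reduction of the supremum to two hypotheses, Markov's inequality, passage to a test via the triangle inequality, and the Neyman--Pearson identity together with the tensorisation formula $\chi^2(P^{\otimes n}\mid Q^{\otimes n})=(1+\chi^2(P\mid Q))^n-1$). So approach and content match.

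One small imprecision in your closing remark: the bare chain $\mathrm{TV}\leq H$ together with $H^2\leq\chi^2$ still leaves a restriction on $\alpha$ and does not by itself give $1-\mathrm{TV}$ bounded away from zero for arbitrary finite $\alpha$. What does work for every finite $\alpha$ is Le Cam's inequality $1-\mathrm{TV}\geq\tfrac12\bigl(\int\sqrt{p_0p_1}\bigr)^2$ combined with a lower bound on the affinity in terms of $\chi^2$: writing $L=dP_1/dP_0$, Cauchy--Schwarz gives $1=\bigl(E_0[L^{1/4}L^{3/4}]\bigr)^2\leq E_0[\sqrt{L}]\,E_0[L^{3/2}]$ and Jensen gives $E_0[L^{3/2}]\leq(E_0[L^2])^{3/4}=(1+\chi^2)^{3/4}$, hence $\int\sqrt{p_0p_1}\geq(1+\chi^2)^{-3/4}$. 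Applied to the product measures this yields $1-\mathrm{TV}_n\geq\tfrac12\alpha^{-3/2}>0$ for any finite $\alpha$, which is the bound you need. With this clarification your proof is complete.
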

	
	Let $\beta\in(0,\pi)$ and $0<a<b<\pi/\beta$. Define for $M>0$
	$$q(x):=\frac{ \sin(\beta)}{\beta} \frac{1}{1+x^{\pi/\beta}} \quad
	\text{and} \quad 
	\rho_M(x):=\frac{1}{\sqrt{2\pi}} e^{-\log^2(x)/2}\frac{\sin(M\log(x))}{x}, \quad x\geq 0.$$
	The following lemma provides some properties of the functions $q$ and $\rho_M$.
	\begin{lem}
		The function $q$ is a probability density on $\R_+$ with Mellin transform
		\begin{equation}\label{mellinvonq}
		\M[q](z)=\frac{\sin(\beta)}{\sin(\beta z)},\quad 0<\Re(z)<\pi/\beta.
		\end{equation}
		The Mellin transform of the function $\rho_M$ is given by 
		\begin{equation}\label{mellinvonrhoM}
		\M[\rho_M](z)=\frac{e^{(z-1+iM)^2/2} - e^{(z-1-iM)^2/2}}{2i}, \quad z\in\C.
		\end{equation}
	\end{lem}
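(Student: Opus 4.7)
The statement has two parts, and both reduce to standard Mellin-type computations.

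For the first part, nonnegativity of $q$ on $\mathbb{R}_+$ is immediate since $\sin(\beta)>0$ for $\beta\in(0,\pi)$ and the denominator is positive. The main task is to evaluate
\[ \int_0^\infty \frac{x^{z-1}}{1+x^{\pi/\beta}}\,dx. \]
I would substitute $u=x^{\pi/\beta}$, which turns this into $\frac{\beta}{\pi}\int_0^\infty \frac{u^{z\beta/\pi-1}}{1+u}\,du$, and then invoke the classical beta/reflection integral $\int_0^\infty u^{s-1}/(1+u)\,du=\pi/\sin(\pi s)$, valid for $0<\Re(s)<1$. With $s=z\beta/\pi$, the hypothesis $0<\Re(z)<\pi/\beta$ falls exactly into the required strip, yielding \eqref{mellinvonq}. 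Specializing to $z=1$ gives $\mathcal{M}[q](1)=1$, which proves that $q$ is a probability density; so the integrability check and the Mellin identity are accomplished simultaneously.

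For the second part, the natural move is the change of variables $y=\log(x)$. Since $\rho_M(x)\,x^{z-1}\,dx=\frac{1}{\sqrt{2\pi}}e^{-y^2/2}\sin(My)e^{(z-1)y}\,dy$, the Mellin transform becomes a shifted Gaussian-trigonometric integral on $\mathbb{R}$. Writing $\sin(My)=(e^{iMy}-e^{-iMy})/(2i)$ splits it into two Gaussian integrals, each of which is evaluated by completing the square using the identity $\frac{1}{\sqrt{2\pi}}\int_{\mathbb{R}}e^{-y^2/2+ay}\,dy=e^{a^2/2}$ with $a=z-1\pm iM$. This gives \eqref{mellinvonrhoM} directly, and the resulting expression is an entire function of $z$, which is consistent with the bound the Gaussian factor provides in every vertical strip (so the strip of definition is all of $\mathbb{C}$).

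I do not foresee a substantive obstacle: both computations are mechanical once the right change of variables is in place. The only point requiring a little care is the strip of convergence in part one, where the condition $0<\Re(z)<\pi/\beta$ is exactly what makes the beta integral converge at both $0$ and $\infty$; outside of this range the identity must be interpreted as analytic continuation via the meromorphic extension of $1/\sin(\beta z)$.
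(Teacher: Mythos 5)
Your computations are correct, and both parts go through without issue: the substitution $u=x^{\pi/\beta}$ together with the beta integral $\int_0^\infty u^{s-1}/(1+u)\,du=\pi/\sin(\pi s)$ gives \eqref{mellinvonq} in the stated strip, and the log-substitution plus completing the square gives \eqref{mellinvonrhoM} as an entire function. The normalization check $\M[q](1)=1$ and the nonnegativity observation dispose of the ``probability density'' claim. The paper does not actually carry out these calculations: it simply cites a table of Mellin transforms for \eqref{mellinvonq} and Lemma~6.2 of \cite{Belomestnyallgemein} for \eqref{mellinvonrhoM}. So you have supplied a self-contained elementary proof where the paper delegates to references; the two approaches establish the same statement, and yours has the advantage of making the strip of convergence in the first identity (and the entirety of the second) visibly transparent from the change of variables, at the small cost of a somewhat longer exposition.
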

	\begin{proof}
		Formula \eqref{mellinvonq} can be found in \cite{mellintables} and \eqref{mellinvonrhoM} is shown in \cite[Lemma 6.2]{Belomestnyallgemein}. 
	\end{proof}
	Set now for any $M >0$ and some $\delta>0$,
	\begin{equation} \label{deffi}
	f_{0,M}(x):=q(x)\quad \text{and} \quad  f_{1,M}(x):=q(x)+ \delta(q\odot \rho_M)(x)
	\end{equation}
	for $x\geq 0$, where $q\odot \rho_M$ is defined by \eqref{mellinfaltungfkt}. The following lemma will help us verify condition \eqref{dxkrit}.
	\begin{lem}\label{supdistf0f1}
		For any $M>0$ and some $\delta>0$ not depending on $M$ the function $f_{1,M}$ is a probability density satisfying
		\begin{equation}\label{nachunteninl1}
		\sup\limits_{x\geq 0} |f_{0,M}(x)-f_{1,M}(x)|\gtrsim \exp(-M\beta),\quad M\ra\infty. 
		\end{equation} 
		Moreover, $f_{0,M}$ and $f_{1,M}$ are in $\mathcal{C}(\beta,a,b)$ for all $\beta\in(0,\pi)$ and $0<a<b<\pi/\beta$.
	\end{lem}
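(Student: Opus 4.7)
The plan is to verify the lemma's three assertions---$f_{1,M}$ is a probability density, the sup-norm lower bound $\sup_x|f_{0,M}-f_{1,M}|\gtrsim e^{-M\beta}$, and the class membership $f_{0,M},f_{1,M}\in\mathcal{C}(\beta,a,b)$---in that order, using the explicit forms of $q$ and $\M[\rho_M]$ together with Mellin-inversion techniques.

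For the density claim, normalization is immediate from Theorem~\ref{mellinprodukt}: $\int_0^\infty(q\odot\rho_M)(x)\,dx = \M[q](1)\M[\rho_M](1)$, and plugging $z=1$ into \eqref{mellinvonrhoM} gives $(e^{-M^2/2}-e^{-M^2/2})/(2i)=0$. Non-negativity is the substantive part: I would establish a pointwise majorant $|q\odot\rho_M|(x)\le C(\beta)\,q(x)$ with $C(\beta)$ independent of $M$, after which any $\delta<1/C(\beta)$ guarantees $f_{1,M}\ge(1-\delta C(\beta))q>0$. To produce the majorant, pass to log-coordinates via $y=e^u$ to write
\[(q\odot\rho_M)(x) = \frac{1}{\sqrt{2\pi}}\int_{-\infty}^{\infty} q(xe^{-u})\,e^{-u^2/2-u}\sin(Mu)\,du,\]
bound $|\sin|\le 1$, and use the two-sided envelope $q(y)\asymp\min(1,y^{-\pi/\beta})$ to dominate $q(xe^{-u})/q(x)$ uniformly in $x$ by $2(1+e^{u\pi/\beta})$; the Gaussian weight $e^{-u^2/2-u}$ then keeps the remaining integral finite and $M$-free (completing the square uses $\pi/\beta>1$).

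For the sup-norm lower bound, I would apply Mellin inversion on $\Re z=1$ to rewrite
\[(q\odot\rho_M)(x) = \frac{1}{4\pi i\,x}\int_{-\infty}^{\infty}\frac{\sin\beta}{\sin(\beta+i\beta v)}\bigl(e^{-(v+M)^2/2}-e^{-(v-M)^2/2}\bigr)x^{-iv}\,dv,\]
and exploit that the Gaussian factors localize the integrand near $v=\pm M$. Substituting $v=\pm M+w$ and using the asymptotic $1/\sin(\beta+i\beta(\pm M+w))\sim \mp 2i\,e^{\pm i\beta}\,e^{-\beta M\mp\beta w}$ as $M\to\infty$, the two complex-conjugate peak contributions combine into the real leading-order formula
\[(q\odot\rho_M)(x) \sim \sqrt{\tfrac{2}{\pi}}\,\frac{\sin\beta\,e^{\beta^2/2}}{x}\,e^{-\log^2 x/2}\,e^{-M\beta}\cos\bigl(M\log x-\beta(1+\log x)\bigr).\]
Choosing $x_M := \exp(\beta/(M-\beta))\to 1$ makes the cosine equal to $1$, so $|(q\odot\rho_M)(x_M)|\gtrsim e^{-M\beta}$; multiplication by $\delta$ yields the sup-norm estimate of the lemma.

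For membership in $\mathcal{C}(\beta,a,b)$, the formula $\tilde q(z)=\sin(\beta)/(\beta(1+z^{\pi/\beta}))$ manifestly defines a holomorphic extension to $\{|\arg z|<\beta\}$ (since for such $z$, $z^{\pi/\beta}$ has argument in $(-\pi,\pi)$ and cannot equal $-1$), is $O(|z|^{-\pi/\beta})$ at infinity and bounded near the origin, so $q\in\mathcal{C}(\beta,a,b)$ for all $0<a<b<\pi/\beta$. For $q\odot\rho_M$ I would invoke Mellin inversion: the Gaussian decay of $\M[\rho_M](\gamma+iv)$ in $v$ dominates $|z^{-s}|=|z|^{-\gamma}e^{v\arg z}$ for every $\arg z\in(-\pi,\pi)$, so the inverse integral defines a holomorphic extension to the sector; a rightward contour shift past the simple pole at $s=\pi/\beta$ extracts a residue of order $z^{-\pi/\beta}$, while a leftward shift past $s=0$ yields boundedness near the origin. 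The main obstacle will be the uniform bound $|q\odot\rho_M|\le C(\beta)q$: a crude $L^\infty$ estimate does not decay at infinity while $q$ does, so the inequality requires matching the correct polynomial decay, and keeping $C(\beta)$ free of $M$ (so that a single $\delta$ serves every $M$) is exactly what makes the min-envelope argument for $q$ essential. The saddle-point evaluation for the lower bound is similarly delicate, because the expansions of $1/\sin(\beta+i\beta v)$ at the two peaks must combine correctly into a nonzero real leading term.
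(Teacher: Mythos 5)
The paper's own proof of this lemma is essentially a citation: it refers to \cite[Lemma 6.3]{Belomestnyallgemein} both for the lower bound \eqref{nachunteninl1} and for the domination inequality \eqref{abschforclass}, and then dismisses the class-membership claim with ``it is easy to see.'' Your proposal takes a genuinely different, self-contained route. You derive the $M$-independent pointwise majorant $|q\odot\rho_M|\le C(\beta)\,q$ directly, by passing to log-coordinates and using the two-sided envelope $q(y)\asymp\min(1,y^{-\pi/\beta})$ to control $q(xe^{-u})/q(x)$ by $\max(1,e^{u\pi/\beta})$, after which the Gaussian weight makes the $u$-integral finite and $M$-free; this is exactly the content the paper outsources to the reference. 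For the lower bound you carry out the Laplace/saddle evaluation of the Mellin-inversion integral on $\Re z=1$: the Gaussian factors from $\M[\rho_M]$ localize at $v=\pm M$, the asymptotics $\sin(\beta+i\beta v)\sim \frac{i}{2}e^{-i\beta}e^{\beta v}$ for $v\to+\infty$ give the stated peak expansion, and the two conjugate peaks combine into a real leading term $\sim\sqrt{2/\pi}\,\sin\beta\,e^{\beta^2/2}\,x^{-1}e^{-\log^2 x/2}\,e^{-M\beta}\cos(M\log x-\beta(1+\log x))$, which at $x_M=\exp(\beta/(M-\beta))$ gives the required $\gtrsim e^{-M\beta}$; I checked the trigonometric and Gaussian-integral bookkeeping and it is consistent. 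Your approach buys transparency (explicit constants, an explicit witness $x_M\to 1$) at the cost of length. A further point in your favor: the paper's claim that \eqref{abschforclass} alone ``implies'' $f_{1,M}\in\mathcal C(\beta,a,b)$ is a shortcut---a pointwise bound on $\R_+$ does not by itself produce a holomorphic extension to the sector $S_\beta$. You correctly flag that one must establish the extension separately, via the Mellin inversion integral whose integrand's Gaussian decay dominates $|z^{-s}|=|z|^{-\gamma}e^{v\arg z}$ for every $|\arg z|<\pi$, and then obtain the $z\to 0$ and $|z|\to\infty$ decay by shifting the inversion contour (staying within $(0,\pi/\beta)$ so as not to cross the poles of $\M[q]$). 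The only part of your write-up that remains at sketch level is precisely this last contour-shift argument, but the plan is sound and is in any case more than the paper itself supplies.
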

	\begin{proof}
		For \eqref{nachunteninl1} see \cite[Lemma 6.3]{Belomestnyallgemein} where it is also shown that for $\delta$ small enough:
		\begin{equation}\label{abschforclass}
		\delta|(q\odot \rho_M)(x)| \leq \delta\int_0^\infty \left|\frac{q(t)\rho_M(x/t)}{t} \right|dt \leq f_{0,M}(x), \quad x\geq 0. 
		\end{equation} 
		It is easy to see that $f_{0,M}\in\mathcal{C}(\beta,a,b)$ for all $\beta\in(0,\pi),~0<a<b<\pi/\beta$ and \eqref{abschforclass} implies the same for $f_{1,M}$.
	\end{proof}
	Looking further towards applying Theorem \ref{reductiontotwohypos} let us consider the densities $p_{M,0}$ and $p_{M,1}$ of an observation associated with the hypotheses $f_{0,M}$ and $f_{1,M}$, respectively. At this point we have to differentiate between the models we discussed so far. We will only present the proof for the Bessel case, parts (ii) and (iii) of Theorem \ref{optimalitysatz} can be showed along the same lines.
	
	Let $T_{0,M}$ and $T_{1,M}$ be two random variables with respective densities $f_{0,M}$ and $f_{1,M}$. The density of the random variable $BES_{T_{i,M}}$, $i=0,1$ is obtained via \eqref{dichtevonyT}:
	$$ p_{i,M}(x)=\frac{2^{1-\frac{d}{2}}}{\Gamma\left({d}/{2}\right)} x^{d-1} \int_0^\infty   \lambda^{-d/2} e^{-\frac{x^2}{2\lambda}} f_{i,M}(\lambda) d\lambda,\quad x>0,d\geq 1,i=0,1.$$
	For the Mellin transform of $p_{i,M}$ we use self-similarity of $BES$ and \eqref{besselmellin} to get
	\begin{align}
	\M[p_{i,M}](s)&=\M[BES_1](s)\M[T_{i,M}]((s+1)/2) \nonumber\\
	&= \frac{1}{\Gamma\left({d}/{2}\right)}\Gamma\left(\frac{s+d-1}{2}\right) 2^{\frac{s+1}{2}}\M[f_{i,M}]((s+1)/2), \quad i=0,1 \label{mellinvonpi}
	\end{align}
	for $\Re(s)>1-d$ and $\Re(s)\in(-1,\frac{2\pi}{\beta}+1)$.
	\begin{lem} \label{chi2distance1}
		For all $d\geq 1$ and $\beta\in(0,\pi)$ we have
		$$ \chi^2(p_{1,M}|p_{0,M})\lesssim M^{(\pi/\beta)+d-2} e^{-M(\pi+2\beta)},\quad M\ra\infty $$
	\end{lem}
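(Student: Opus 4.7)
Let $h_M := p_{1,M} - p_{0,M}$. By \eqref{mellinvonpi} and \eqref{deffi}, the Mellin transform of $h_M$ factorises multiplicatively:
$$\M[h_M](s) = \frac{\delta\, 2^{(s+1)/2}}{\Gamma(d/2)}\, \Gamma\!\left(\frac{s+d-1}{2}\right) \M[q]\!\left(\frac{s+1}{2}\right) \M[\rho_M]\!\left(\frac{s+1}{2}\right).$$
The first step of the plan is to derive a pointwise lower bound on $p_{0,M}$ directly from the integral representation
$p_{0,M}(x) = C_d\, x^{d-1}\int_0^\infty \lambda^{-d/2} e^{-x^2/(2\lambda)} q(\lambda)\, d\lambda.$
The small-$x$ asymptotics give $p_{0,M}(x) \gtrsim x^{d-1}$ on bounded sets (the integral being bounded and positive); the substitution $\mu = x^2/(2\lambda)$ combined with the tail $q(\lambda)\sim \lambda^{-\pi/\beta}$ as $\lambda\to\infty$ yields $p_{0,M}(x)\gtrsim x^{1-2\pi/\beta}$ for large $x$. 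Together these give the uniform bound $1/p_{0,M}(x) \lesssim x^{-(d-1)} + x^{2\pi/\beta-1}$.

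Consequently,
$$\chi^2(p_{1,M}|p_{0,M}) = \int_0^\infty \frac{h_M^2(x)}{p_{0,M}(x)}\, dx \lesssim \int_0^\infty h_M^2(x)\, x^{-(d-1)}\, dx + \int_0^\infty h_M^2(x)\, x^{2\pi/\beta -1}\, dx.$$
Each weighted $L^2$-norm will be evaluated via the Mellin-Plancherel identity
$$\int_0^\infty |h_M(x)|^2\, x^{2\gamma-1}\, dx = \frac{1}{2\pi} \int_{-\infty}^\infty |\M[h_M](\gamma+iv)|^2\, dv,$$
applied with $\gamma_1 = (3-d)/2$ for the first term and $\gamma_2 = \pi/\beta$ for the second. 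Both values lie strictly inside the Mellin strip $(1-d,\, 2\pi/\beta-1)$ of $h_M$, using $d\geq 1$ and $0<\beta<\pi$.

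Substituting the factorisation of $\M[h_M]$ reduces the problem, for each $\gamma\in\{\gamma_1,\gamma_2\}$, to bounding
$$J_\gamma(M) := \int_{-\infty}^\infty \left|\Gamma\!\left(\tfrac{\gamma+d-1+iv}{2}\right)\right|^2 \left|\M[q]\!\left(\tfrac{\gamma+1+iv}{2}\right)\right|^2 \left|\M[\rho_M]\!\left(\tfrac{\gamma+1+iv}{2}\right)\right|^2 dv.$$
I would estimate the three factors separately: Stirling's asymptotic gives the first $\sim |v|^{\gamma+d-2} e^{-\pi|v|/2}$; the explicit formula \eqref{mellinvonq} gives the second $\sim e^{-\beta|v|}$; and \eqref{mellinvonrhoM} shows the third is Gaussian-concentrated on windows of size $O(1)$ around $v=\pm 2M$ with $O(1)$ peak height (after absorbing a bounded factor $e^{((\gamma-1)/2)^2}$). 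A Laplace-type evaluation at the peaks then gives $J_\gamma(M) \lesssim M^{\gamma+d-2}\, e^{-(\pi+2\beta)M}$. Since $\pi/\beta > 1 \geq (3-d)/2 \cdot (\text{or comparable})$, the dominant exponent of $M$ comes from $\gamma=\pi/\beta$, producing the claimed $M^{\pi/\beta+d-2}\, e^{-(\pi+2\beta)M}$.

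The main obstacle will be step one: obtaining the two-sided polynomial control of $p_{0,M}$ uniformly in $x$ and in $M$ (noting $p_{0,M}$ is in fact $M$-free, which helps) in a form sharp enough that the simple weight $x^{-(d-1)}+x^{2\pi/\beta-1}$ suffices; the tail estimate in particular requires carefully passing the tail behaviour of $q$ through the Gaussian kernel. Once the weight is in place, the remaining work is asymptotic Gamma-function bookkeeping together with the sharp concentration properties of $\M[\rho_M]$ around $\pm iM$.
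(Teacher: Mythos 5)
Your high-level strategy is essentially the paper's own: reduce $\chi^2$ to weighted $L^2$ norms of $h_M=p_{1,M}-p_{0,M}$, pass to the Mellin side (you via Plancherel, the paper via Theorem \ref{parseval} — cosmetically different, not substantively), factorise $\M[h_M]$, and exploit Stirling for the $\Gamma$ factor together with the Gaussian concentration of $\M[\rho_M]$ near $\pm iM$ and the exponential decay $|\M[q](\sigma+iw)|\sim e^{-\beta|w|}$. Your formula $J_\gamma(M)\lesssim M^{\gamma+d-2}e^{-(\pi+2\beta)M}$ agrees with \eqref{step2}, and at $\gamma_2=\pi/\beta$ it reproduces the stated rate.

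There are however two flaws in the setup, one cosmetic and one substantive. The cosmetic one: to turn the weight $x^{-(d-1)}$ into a Plancherel line, you need $2\gamma_1-1=-(d-1)$, i.e.\ $\gamma_1=(2-d)/2=1-d/2$, not $(3-d)/2$. The substantive one concerns the Mellin strip of $h_M$. You state it is $(1-d,\,2\pi/\beta-1)$, but the true left endpoint is $\max\{1-d,-1\}$. Indeed $\M[h_M](s)$ has, besides the $\Gamma((s+d-1)/2)$ poles at $s=1-d,-1-d,\dots$, a pole at $s=-1$ coming from $\M[q]((s+1)/2)=\sin\beta/\sin(\beta(s+1)/2)$; for $d>2$ the pole at $s=-1$ sits to the right of $s=1-d$ and governs the small-$x$ behaviour, which is therefore $h_M(x)\sim x$ (not $x^{d-1}$) as $x\to 0$. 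Your preparatory remark ``$p_{0,M}(x)\gtrsim x^{d-1}$ on bounded sets (the integral being bounded and positive)'' is already suspect here: for $d\geq 2$ the integral $\int_0^\infty \lambda^{-d/2}e^{-x^2/(2\lambda)}q(\lambda)\,d\lambda$ is not bounded as $x\to 0$; it blows up, which is precisely why $p_{0,M}(x)\sim x$, a much slower vanishing than $x^{d-1}$. Consequently, for $d\geq 4$ your proposed contour $\gamma_1=1-d/2\leq -1$ lies outside the strip, and correspondingly the integral $\int_0^\infty h_M^2(x)\,x^{-(d-1)}\,dx$ diverges near $0$ (since $h_M^2(x)x^{-(d-1)}\sim x^{3-d}$), so the first term in your decomposition gives $\infty\leq\infty$ and the argument breaks. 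The remedy is to bound $1/p_{0,M}(x)\lesssim x^{-\min\{d-1,1\}}+x^{2\pi/\beta-1}$ instead of $x^{-(d-1)}+x^{2\pi/\beta-1}$, giving $\gamma_1=\max\{1-d/2,0\}$ which always lies in the strip; the resulting power $M^{\gamma_1+d-2}$ is still dominated by $M^{\pi/\beta+d-2}$, so the final bound is unchanged. Once this weight is corrected, the rest of your asymptotic bookkeeping goes through.
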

	\begin{proof}
		Define $c_{\beta,d}:=\frac{2^{1-\frac{d}{2}}}{\Gamma\left({d}/{2}\right)}\frac{ \sin(\beta)}{\beta}$. By the change of variables $y=\frac{1}{\lambda}$,
		\begin{align}
		p_{0,M}(x)&= c_{\beta,d} x^{d-1} \int_0^\infty   \lambda^{-d/2} e^{-\frac{x^2}{2\lambda}}  \frac{1}{1+\lambda^{\pi/\beta}} d\lambda \nonumber\\
		&= c_{\beta,d} x^{d-1}\int_0^\infty    e^{-y\frac{x^2}{2}} y^{\frac{\pi}{\beta}+\frac{d}{2}-2} \left(1-\frac{y^{\pi/\beta}}{y^{\pi/\beta}+1}\right) dy\nonumber\\
		&= c_{\beta,d} x^{d-1}\left(\int_0^\infty    e^{-y\frac{x^2}{2}} y^{\frac{\pi}{\beta}+\frac{d}{2}-2} dy -R\right)\nonumber\\
		&= c_{\beta,d} \left(  \Gamma\left(\frac{\pi}{\beta}+\frac{d}{2}-1\right) 2^{\frac{\pi}{\beta}+\frac{d}{2}-1} x^{-\frac{2\pi}{\beta}+1} -x^{d-1} R\right) \nonumber\\
		&\gtrsim x^{-\frac{2\pi}{\beta}+1}, \quad x\ra\infty \label{asympforp0m}
		\end{align}
		with
		\begin{align*}
		R:=   \int_0^\infty e^{-y\frac{x^2}{2}}  \frac{y^{2\pi/\beta+d/2-2}}{y^{\pi/\beta}+1} dy	&\leq   \int_0^\infty e^{-y\frac{x^2}{2}} y^{\frac{\pi}{\beta}+\frac{d}{2}-\frac{3}{2}} dy\lesssim x^{-\frac{2\pi}{\beta}-d+1},\quad x\ra\infty.
		\end{align*}
		For the next step let $a\in\{0,(2\pi/\beta)-1\}$. We apply Theorem \ref{parseval} and the rule $\M[(\cdot)^a f(\cdot)](z)=\M[f(\cdot)](z+a)$ and obtain
		\begin{align}
		\int_0^\infty x^{a} (p_{0,M}&(x)-p_{1,M}(x))^2 dx \nonumber\\
		=&~ \frac{\delta^2 2^\frac{3+a}{2}}{\pi i\Gamma\left({d}/{2}\right)^2} \int_{\gamma-i\infty}^{\gamma+i\infty} \Gamma\left(\frac{z+d-1}{2}\right) \M[q\odot \rho_M]\left(\frac{z+1}{2}\right) \label{instep2} \\ 
		&\times \Gamma\left(\frac{a+d-z}{2}\right) \M[q\odot \rho_M]\left(\frac{2+a-z}{2}\right) dz \nonumber
		\end{align}
		for suitable $\gamma$, where $\M[q\odot \rho_M]=\M[q] \M[\rho_M]$. Due to \eqref{mellinvonrhoM}, we can estimate
		\begin{equation}\label{abschmrho}
		|\M[\rho_M](u+iv)|  \leq e^{\frac{(u-1)^2}{2}}\frac{\varphi(v+M) +\varphi(v-M)}{2}
		\end{equation}
		with $\varphi(v)=e^{-\frac{v^2}{2}}$. Next, we use Lemma \ref{abschgamma0} in \eqref{instep2} to estimate the gamma terms, then plug in \eqref{abschmrho} and $|\M[q](u+iv)| \leq ce^{-\beta|v|}$ for some $c>0$ to obtain
		\begin{equation}\label{step2}
		\int_0^\infty x^{a} (p_{1,M}(x)-p_{0,M}(x))^2 dx 
		\lesssim M^\frac{2d+a-3}{2} e^{-M\pi(1+2\beta/\pi)} , \quad M\ra\infty
		\end{equation}
		for $a\in\{0,(2\pi/\beta)-1\}$. By \eqref{asympforp0m} and \eqref{step2},
		\begin{align*}
		\chi^2(p_{1,M}|p_{0,M}) &= \int_0^\infty \frac{(p_{M,1}(x)-p_{M,0}(x))^2}{p_{M,0}(x)}dx \\
		&\lesssim \int_{0}^{\infty} (p_{M,1}(x)-p_{M,0}(x))^2 dx 
		+ \int_{0}^\infty x^{-\frac{2\pi}{\beta}+1} (p_{M,1}(x)-p_{M,0}(x))^2 dx\\
		&\lesssim M^{(2d-3)/2} e^{-M(\pi+2\beta)} +M^{(\pi/\beta)+d-2}  e^{-M(\pi+2\beta)}, \quad M\ra\infty,
		\end{align*}
		where $M^{(\pi/\beta)+d-2} e^{-M(\pi+2\beta)}$ is the dominating term. This proves the lemma.
	\end{proof}
	Lemma \ref{chi2distance1} implies \eqref{modifiedchisquredcrit}. With the choice
	$$ M= \frac{\log(n)}{\pi+2\beta}$$
	Lemma \ref{supdistf0f1} implies \eqref{dxkrit}. Claim of Theorem \ref{optimalitysatz}(i) follows with Theorem \ref{reductiontotwohypos}.

\section{Appendix}
For proof of Lemmas \ref{gammaarggegeninf} and \ref{abschgamma0} we refer to \cite{andrews}.
\begin{lem}\label{gammaarggegeninf}
	For $|\arg(s)|\leq \pi$ we have $ \Gamma(s) \sim \sqrt{2\pi}s^{s-1/2}e^{-s}$ for $|s|\ra\infty.$
\end{lem}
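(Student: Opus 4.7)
The plan is to establish Stirling's asymptotic via Laplace's (saddle point) method applied to Euler's integral representation on the real axis, and then extend to complex arguments by contour rotation in the right half-plane and by analytic continuation (using the reflection formula) in the remaining sector.

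For real $s>0$ I would start from $\Gamma(s) = \int_0^\infty t^{s-1} e^{-t}\,dt$ and perform the substitution $t = su$, obtaining
$$\Gamma(s) = s^s \int_0^\infty e^{-s\varphi(u)}\,\frac{du}{u}, \qquad \varphi(u) := u - \log u.$$
The phase $\varphi$ has a unique minimum at $u=1$ with $\varphi(1)=1$ and $\varphi''(1)=1$, so a standard Laplace expansion gives
$$\int_0^\infty e^{-s\varphi(u)}\,\frac{du}{u} = e^{-s}\sqrt{\frac{2\pi}{s}}\bigl(1+O(s^{-1})\bigr), \qquad s\to\infty,$$
and the claim follows for real positive $s$. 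For complex $s$ with $|\arg s| \leq \pi/2 - \delta$, the same substitution $t = su$ deforms the contour of integration through a rotation by angle $\arg s$; the rotation is justified by closing the contour at $|t|=R\to\infty$ and verifying that the arcs contribute negligibly (which holds since $\Re(s)>0$), and the Laplace analysis goes through unchanged along the rotated path.

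To reach the full range $|\arg s|\leq \pi - \delta$ I would invoke Binet's second formula
$$\log\Gamma(s) = \bigl(s-\tfrac{1}{2}\bigr)\log s - s + \tfrac{1}{2}\log(2\pi) + 2\int_0^\infty \frac{\arctan(t/s)}{e^{2\pi t}-1}\,dt,$$
valid for $\Re s > 0$, where an elementary bound $|\arctan(t/s)|\lesssim t/|s|$ on the remainder shows it is $O(|s|^{-1})$ uniformly on any closed subsector of the right half-plane. This immediately upgrades the Laplace result from the previous paragraph into a uniform asymptotic on such subsectors. To handle $\pi/2 \leq |\arg s| < \pi$ I would then apply the reflection identity $\Gamma(s)\Gamma(1-s) = \pi/\sin(\pi s)$, which transports the asymptotic from $1-s$ (which lies in the right half-plane) to $s$, using standard estimates on $\sin(\pi s)$ away from integer points.

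The main obstacle is uniformity: Laplace's method on the positive axis is immediate, but controlling the rotated steepest-descent contour as $\arg s$ approaches $\pm\pi$ is delicate because the saddle at $u=1$ moves off the original integration path. The Binet-plus-reflection route circumvents this cleanly, but one must be careful that the implicit constants in the $O(|s|^{-1})$ remainder estimates remain bounded uniformly as we let $\delta\to 0$; they necessarily blow up precisely near $\arg s = \pm\pi$, which is why the statement of the lemma is understood on closed subsectors bounded away from the negative real axis. An alternative unified approach, which I would fall back on if the Binet bookkeeping becomes cumbersome, is the Hankel contour representation $\Gamma(s)^{-1} = \frac{1}{2\pi i}\int_{\mathcal H} e^{t} t^{-s}\,dt$, to which steepest descent applies directly and uniformly in any subsector $|\arg s|\leq\pi-\delta$.
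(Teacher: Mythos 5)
The paper does not prove this lemma; it simply refers to \cite{andrews} for a proof of Stirling's asymptotic, so there is no argument in the paper to align your sketch against. Your route --- Laplace's method on the Euler integral for real $s$, contour rotation to reach $|\arg s| < \pi/2$, and then either Binet's second formula together with the reflection formula, or a Hankel-contour steepest-descent argument for $1/\Gamma$, to cover wider sectors --- is correct and is essentially the textbook derivation that \cite{andrews} gives. You are also right to observe that the lemma as literally stated ($|\arg s|\leq\pi$) is slightly too strong: $\Gamma$ has poles on the negative real axis, and the asymptotic is uniform only on closed subsectors $|\arg s|\leq\pi-\delta$. That imprecision is harmless in the paper, which only invokes the lemma along vertical lines $\alpha+iv$ with $\alpha$ fixed and $|v|\to\infty$, i.e.\ inside a fixed subsector. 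One step to tighten if you wanted a fully rigorous write-up: the bound $|\arctan(t/s)|\lesssim t/|s|$ in the Binet remainder needs the correct branch of $\arctan$ for complex $s$ and a short justification on subsectors of $\Re s>0$; the Hankel-contour alternative you mention sidesteps this cleanly.
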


\begin{lem} \label{abschgamma0}
	For all $\alpha\in\R$ there are $C_1,C_2\geq 0$ such that 
	$$ C_1|\beta|^{\alpha-1/2}e^{-|\beta|\pi/2} \leq |\Gamma(\alpha+i\beta)| \leq C_2 |\beta|^{\alpha-1/2}e^{-|\beta|\pi/2}, \quad |\beta|\geq 2.$$
	\end{lem}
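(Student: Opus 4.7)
The plan is to derive the two-sided bound directly from Stirling's asymptotic formula (Lemma \ref{gammaarggegeninf}), then to extend from ``$|\beta|$ large'' to ``$|\beta|\geq 2$'' by a compactness argument. Since $|\arg(\alpha+i\beta)|<\pi/2<\pi$, Stirling gives $\Gamma(\alpha+i\beta)\sim\sqrt{2\pi}\,s^{s-1/2}e^{-s}$ with $s=\alpha+i\beta$ as $|\beta|\to\infty$, and the task reduces to reading off the modulus of the right-hand side.

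The key computation is
$$\log|s^{s-1/2}e^{-s}| = \Re\bigl[(s-1/2)\log s - s\bigr] = (\alpha-1/2)\log|s| - \beta\arg(s) - \alpha.$$
I would expand $\log|s|=\log|\beta|+O(\beta^{-2})$ and, using the principal-branch identity $\arg(\alpha+i\beta)=\operatorname{sgn}(\beta)\pi/2-\alpha/\beta+O(\beta^{-3})$, obtain $\beta\arg(s)=|\beta|\pi/2-\alpha+O(\beta^{-2})$. Plugging back yields the sharp asymptotic
$$|\Gamma(\alpha+i\beta)| = \sqrt{2\pi}\,|\beta|^{\alpha-1/2}e^{-|\beta|\pi/2}\bigl(1+o(1)\bigr),\quad |\beta|\to\infty,$$
which immediately produces constants $C_1',C_2'>0$ realizing the two-sided bound for all $|\beta|\geq R$ with some sufficiently large $R$.

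To cover the remaining range $|\beta|\in[2,R]$, I would note that $\beta\mapsto\Gamma(\alpha+i\beta)$ is continuous and never zero there (the Gamma function has no zeros, and its only singularities are the nonpositive integers, none of which lie on $\alpha+i\R$ when $\beta\neq 0$). Consequently the ratio $|\Gamma(\alpha+i\beta)|/(|\beta|^{\alpha-1/2}e^{-|\beta|\pi/2})$ is a continuous strictly positive function of $\beta$ on the compact set $[-R,-2]\cup[2,R]$ and hence bounded between positive constants; taking minima and maxima against $C_1',C_2'$ produces the required uniform $C_1,C_2$.

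The only nontrivial bookkeeping is to track the error terms carefully enough to see that $\beta\arg(s)=|\beta|\pi/2+O(1)$ uniformly in the sign of $\beta$; the constant $-\alpha$ in that expansion cancels exactly against the $-\alpha$ coming from $|e^{-s}|=e^{-\alpha}$, which is why no explicit $\alpha$-dependent prefactor remains in the final bound. Apart from this cancellation everything reduces to elementary manipulations of the Stirling asymptotic, so I expect no genuine obstacle.
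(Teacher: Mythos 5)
Your argument is correct. The paper itself does not prove Lemma \ref{abschgamma0}; it simply refers to \cite{andrews}, so there is no ``paper proof'' to compare against, but what you have reconstructed is exactly the standard derivation one finds in such references: take the modulus of Stirling's asymptotic (Lemma \ref{gammaarggegeninf}), expand $\log|s|$ and $\arg(s)$ in powers of $1/\beta$, observe the exact cancellation between the $+\alpha$ from $-\beta\arg(s)$ and the $-\alpha$ from $|e^{-s}|$, and conclude $|\Gamma(\alpha+i\beta)|\sim\sqrt{2\pi}\,|\beta|^{\alpha-1/2}e^{-\pi|\beta|/2}$. Your compactness step to extend from $|\beta|\geq R$ to $|\beta|\geq 2$ is the right way to close the gap, and it uses exactly the two facts needed (continuity of $\Gamma$ off its poles, and the absence of zeros of $\Gamma$), yielding $C_1>0$ rather than merely $C_1\geq 0$, which is what Corollary \ref{abschgamma} actually requires. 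One minor remark: Lemma \ref{gammaarggegeninf} as printed allows $|\arg(s)|\leq\pi$, which is not uniformly true near the cut, but your application only ever has $\arg(s)\to\pm\pi/2$, safely inside any sector $|\arg(s)|\leq\pi-\delta$ where Stirling is uniform, so there is no issue.
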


\begin{kor} \label{abschgamma}
	\begin{itemize}
		\item[(i)] For all $\alpha \in(0,1/2), \delta>0$ and $U>2$ there is a $C(\alpha,\delta)>0$ such that \[\int_{-U}^U \frac{1}{|\Gamma(\alpha+i v)|^\delta}dv \leq C(\alpha,\delta) U^{(1/2-\alpha)\delta}e^{U\pi\delta/2}. \]
		\item[(ii)] For all $\alpha\geq 1/2,~\delta>0$ and $U>2$ there are $C_1(\alpha,\delta)$ and $C_2(\alpha,\delta)>0$ with
		\[\int_{-U}^U \frac{1}{|\Gamma(\alpha+i v)|^\delta}dv \leq C_1(\alpha,\delta) + C_2(\alpha,\delta) e^{U\pi\delta/2}. \]
	\end{itemize}
\end{kor}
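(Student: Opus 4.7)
The plan is to split the integral into a bounded middle piece and two symmetric tails and then apply Lemma \ref{abschgamma0} on each tail. For any $\alpha>0$ the integrand $v\mapsto 1/|\Gamma(\alpha+iv)|^\delta$ is continuous on $[-2,2]$ (since $\Gamma$ is zero-free), so $\int_{-2}^2 |\Gamma(\alpha+iv)|^{-\delta}\,dv$ is a finite constant depending only on $\alpha$ and $\delta$. By symmetry of $|\Gamma(\alpha+iv)|$ in $v$, it then suffices to bound $\int_2^U|\Gamma(\alpha+iv)|^{-\delta}\,dv$.

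On this range Lemma \ref{abschgamma0} yields $|\Gamma(\alpha+iv)|\geq C_1 v^{\alpha-1/2} e^{-v\pi/2}$, hence
\[
\frac{1}{|\Gamma(\alpha+iv)|^\delta} \leq C_1^{-\delta}\, v^{(1/2-\alpha)\delta}\, e^{v\pi\delta/2}, \qquad v\in[2,U].
\]
From here I would branch on the sign of the polynomial exponent $(1/2-\alpha)\delta$, which is exactly what distinguishes the two cases in the corollary.

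For part (i), $\alpha\in(0,1/2)$ makes $(1/2-\alpha)\delta>0$, so monotonicity gives $v^{(1/2-\alpha)\delta}\leq U^{(1/2-\alpha)\delta}$ throughout $[2,U]$, and a direct evaluation $\int_2^U e^{v\pi\delta/2}\,dv\leq \frac{2}{\pi\delta}\,e^{U\pi\delta/2}$ produces the claimed bound $C(\alpha,\delta)\,U^{(1/2-\alpha)\delta}\,e^{U\pi\delta/2}$. Since $U>2$ gives $e^{U\pi\delta/2}\geq e^{\pi\delta}$, the bounded middle piece is absorbed into the constant $C(\alpha,\delta)$.

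For part (ii), $\alpha\geq 1/2$ makes $(1/2-\alpha)\delta\leq 0$, so on $[2,U]$ we have $v^{(1/2-\alpha)\delta}\leq 2^{(1/2-\alpha)\delta}$, and the same exponential integration yields a tail bound of the form $C_2(\alpha,\delta)\,e^{U\pi\delta/2}$; the middle integral contributes the additive constant $C_1(\alpha,\delta)$, giving the stated sum. There is no real obstacle in either case; the only care needed is to handle the two regimes of the polynomial factor separately and to verify that the resulting constants depend only on $\alpha$ and $\delta$, not on $U$.
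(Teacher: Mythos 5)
Your proof is correct and follows essentially the same route as the paper's: both split at $\pm 2$, apply Lemma \ref{abschgamma0} on the tails, bound the polynomial factor by its maximum value in the two regimes $\alpha<1/2$ and $\alpha\geq 1/2$, and integrate the exponential directly. The only cosmetic difference is that the paper keeps both tails together as $\int_{\{2<|v|<U\}}$ while you invoke symmetry to pass to $\int_2^U$; the content is the same.
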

\begin{proof}
Define $C:=\int_{-2}^{2} \frac{1}{|\Gamma(\alpha+i v)|^\delta}dv $. For $\alpha \in(0,1/2)$ Lemma \ref{abschgamma0} gives a $C_1 >0$ such that
	\begin{align*}
	\int_{-U}^U \frac{1}{|\Gamma(\alpha+i v)|^\delta}dv
	&\leq  C +  C_1 \int_{\{2<|v|<U\}} |v|^{(1/2-\alpha)\delta}e^{|v|\pi\delta/2} dv\\
	&\leq  C + 2C_1 U^{(1/2-\alpha)\delta} \int_2^U e^{v\pi\delta/2} dv\\
	&= C +  4C_1 (\pi\delta)^{-1} U^{(1/2-\alpha)\delta} ( e^{U\pi\delta/2} - e^{\pi\delta})
	\end{align*}
	which implies the claim with $C(\alpha,\delta):=\max\{2C, 8C_1 (\pi\delta)^{-1}\}$. The case $\alpha\geq\frac{1}{2}$ follows similarly with $C_1(\alpha,\delta):=C$ and $C_2(\alpha,\delta):= 4 C_1 (\pi\delta)^{-1}$.
\end{proof}

\begin{lem}\label{erdelyi}
	Let $\alpha<\beta$. If $f:(\alpha,\beta)\ra\C$ is $N$ times continuously differentiable ($N\in\N$), then we have the expansion
	$$ \int_{\alpha}^\beta f(u)e^{ixu}du= \sum\limits_{n=0}^{N-1} \frac{i^{n-1}}{x^{n+1}}f^{(n)}(\beta)e^{ix\beta}-\sum\limits_{n=0}^{N-1} \frac{i^{n-1}}{x^{n+1}}f^{(n)}(\alpha)e^{ix\alpha}+o(x^{-N}),\quad x\ra\infty.$$
	\end{lem}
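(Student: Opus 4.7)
The plan is a direct induction on $N$ via repeated integration by parts, reducing at each step to a shorter integral of one more derivative of $f$, and closing the argument with the Riemann--Lebesgue lemma to upgrade the final $O$--term to an $o$--term.

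For the base case $N=1$, integration by parts with $dv = e^{ixu}\,du$ and antiderivative $v = e^{ixu}/(ix)$ yields
\begin{equation*}
\int_\alpha^\beta f(u)\,e^{ixu}\,du = \frac{1}{ix}\bigl[f(\beta)e^{ix\beta} - f(\alpha)e^{ix\alpha}\bigr] - \frac{1}{ix}\int_\alpha^\beta f'(u)\,e^{ixu}\,du.
\end{equation*}
The boundary contribution is precisely the $n=0$ term of the claimed sum, since $1/(ix) = i^{-1}/x = i^{0-1}/x^{0+1}$. Because $f'$ is continuous (hence integrable) on the compact interval $[\alpha,\beta]$, the Riemann--Lebesgue lemma gives $\int_\alpha^\beta f'(u)e^{ixu}\,du = o(1)$ as $x\to\infty$, so the remainder term is $o(x^{-1})$ as claimed.

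For the inductive step, suppose the expansion holds for $N-1$, with remainder $o(x^{-(N-1)})$ coming from an integral of $f^{(N-1)}$ against $e^{ixu}$ multiplied by the factor $(1/(ix))^{N-1}$ up to sign. Applying the same integration by parts identity to $\int_\alpha^\beta f^{(N-1)}(u) e^{ixu}\,du$ produces a new boundary term which, once the prefactor $(1/(ix))^{N-1} \cdot 1/(ix) = i^{N-1-1}/x^N$ is accounted for (using $1/(ix)^k = i^{-k}/x^k = i^{k-1}\cdot (-1)^{k-1}/x^k$; the parity bookkeeping is the only place one must be careful), contributes exactly the $n=N-1$ term of the sum. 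The new remainder is $(1/(ix))^N \int_\alpha^\beta f^{(N)}(u) e^{ixu}\,du$; since $f^{(N)}$ is continuous on $[\alpha,\beta]$, Riemann--Lebesgue again gives that the inner integral is $o(1)$, hence the total remainder is $o(x^{-N})$.

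The main (minor) obstacle is not an analytic one but bookkeeping: tracking the factor $i^{n-1}$ and the sign $(-1)$ through $N$ applications of integration by parts, and making sure the coefficients match the form $i^{n-1}/x^{n+1}$ stated in the lemma rather than, say, $(-i)^{n+1}/x^{n+1}$ (these are equal up to the identity $i^{n-1} = -i\cdot i^n = (-i)(-1)^n(-i)^{-n}$, etc.). Everything else---the integrability of $f^{(n)}$ on $[\alpha,\beta]$ for $0\le n\le N$ needed both to integrate by parts and to invoke Riemann--Lebesgue---is immediate from the hypothesis that $f$ is $N$ times continuously differentiable on the (closure of the) interval.
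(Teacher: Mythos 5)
Your proof is correct. The paper itself offers no argument for this lemma---it simply cites page 47 of Erd\'elyi's book on asymptotic expansions---and your derivation (induction by repeated integration by parts, with the Riemann--Lebesgue lemma supplying the upgrade of the final remainder from $O(x^{-N})$ to $o(x^{-N})$) is exactly the classical proof that reference contains, so you are filling in rather than departing from the intended route. The only detail worth stating explicitly is the coefficient check: after unrolling, the $n$-th boundary term carries the factor $(-1)^n/(ix)^{n+1}$, and since $(-1)^n/i^{n+1} = -i^{n+1} = i^{n-1}$ this agrees with the stated $i^{n-1}/x^{n+1}$; your remark that the hypothesis must be read as $C^N$ up to the closed interval $[\alpha,\beta]$ (so that $f^{(n)}(\alpha)$, $f^{(n)}(\beta)$ make sense) is likewise the right reading of the lemma.
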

\begin{proof}
See \cite[page 47]{erdelyi}.
\end{proof}

\section*{Acknowledgement}
The author was supported by the Deutsche Forschungsgemeinschaft (DFG) via RTG 2131 \textit{High-dimensional Phenomena in Probability – Fluctuations and Discontinuity}.

\bibliographystyle{spbasic}
\bibliography{wa}

\end{document}